\newtheorem{lemma}{Lemma}[section]
\newtheorem{propo}[lemma]{Proposition}
\newtheorem{prop}[lemma]{Proposition}
\newtheorem{conj}[lemma]{Conjecture}
\newtheorem{claim*}{Claim}
\newtheorem{thm}[lemma]{Theorem}
\newtheorem{defn}[lemma]{Definition}
\newtheorem{example}[lemma]{Example}
\theoremstyle{remark}
\newtheorem{remark}[lemma]{Remark}
\newtheorem{rmk}[lemma]{Remark}
\newcommand{\coker}{\operatorname{coker}}
\newcommand{\reg}{\operatorname{reg}}
\newcommand{\im}{\operatorname{im}}
\newcommand{\rank}{\operatorname{rank}}
\newcommand{\codim}{\operatorname{codim}}
\newcommand{\defi}[1]{{\upshape\sffamily #1}}
\newcommand{\BN}{B_{\mathbb N}}
\newcommand{\Bmod}{B_{\operatorname{mod}}}
\newcommand{\BQ}{B_{\mathbb Q}}
\newcommand{\Buchs}{\operatorname{Buchs}_{\bullet}}
\newcommand{\pdim}{\operatorname{pdim}}
\newcommand{\favorite}{\begin{pmatrix}  1&2&-&- \\ -&-&2&1\end{pmatrix}}
\begin{document}
\title{The Semigroup of Betti Diagrams}
\author{Daniel Erman}
\subjclass[2000]{Primary: 13D02. Secondary: 13D25.}
\thanks{The author was partially supported by an NDSEG fellowship.}
\address{Department of Mathematics, University of California,
        Berkeley, CA 94720-3840, USA}
\email{derman@math.berkeley.edu}
\urladdr{http://math.berkeley.edu/\~{}derman}

\begin{abstract}
The recent proof of the Boij-S\"oderberg conjectures reveals new structure about Betti diagrams of modules, giving a complete description of the cone of Betti diagrams.  We begin to expand on this new structure by investigating the semigroup of Betti diagrams.  We prove that this semigroup is finitely generated, and we answer several other fundamental questions about this semigroup.
\end{abstract}
\maketitle

\section{Introduction}\label{sec:intro}
Recent work of a number of authors (\cite{boij-sod}, \cite{efw}, \cite{eis-schrey}, \cite{boij-sod2}) completely characterizes the structure of Betti diagrams of graded modules, but only if we allow one to take arbitrary rational multiples of the diagrams.  This Boij-S\"oderberg theory shows that the rational cone of Betti diagrams is a simplicial fan whose rays and facet equations have a remarkably simple description.\footnote{See \cite{boij-sod} for the original conjecture, \cite{eis-schrey} for the Cohen-Macaulay case, and \cite{boij-sod2} for the general case.  The introduction of \cite{eis-schrey} includes a particularly clear exposition of the main results.}

In this note, we consider the integral structure of Betti diagrams from the perspective of Boij-S\"oderberg theory, and we begin to survey this new landscape.  In particular, we replace the cone by the semigroup of Betti diagrams (see Definition \ref{defn:semigroup} below) and answer several fundamental questions about the structure of this semigroup.

We first use the results of Boij-S\"oderberg theory to draw conclusions about the semigroup of Betti diagrams.  This comparison leads to Theorem \ref{thm:fingen}, that the semigroup of Betti diagrams is finitely generated.

We then seek conditions which prevent a diagram from being the Betti diagram of an actual module.  Using these conditions, we build families of diagrams which are {\em not} the Betti diagram of any module.  For instance, consider the family:
 $$E_\alpha:= \begin{pmatrix}2+\alpha&3&2&- \\ -&5+6\alpha& 7+8\alpha&3+3\alpha\end{pmatrix},\ \ \  \alpha\in \mathbb N$$
We will use the theory of Buchsbaum-Eisenbud multiplier ideals to conclude that no member of this family can be the Betti diagram of a module.  Yet each $E_\alpha$ belongs to the cone of Betti diagrams, and in fact, if we were to multiply any diagram $E_\alpha$ by $3$, then the result {\em would} equal the Betti diagram of a module.

We produce further examples of obstructed diagrams by using properties of the Buchsbaum-Rim complex.  Based on our examples, we establish several negative results about the semigroup of Betti diagrams.  These negative results are summarized in Theorem \ref{thm:negative}.

To state our results more precisely, we introduce notation.  Let $S$ be the polynomial ring $S=k[x_1, \dots, x_n]$ where $k$ is any field.  If $M$ is any finitely generated graded $S$-module then we can take a minimal free resolution:
$$0\to F_p \to \dots \to F_1\to F_0\to M \to 0$$
with $F_i=\oplus_j S(-j)^{\beta_{i,j}(M)}$.  We write $\beta(M)$ for the Betti diagram of $M$, and we think of $\beta(M)$ as an element of the vector space $\oplus_{j=-\infty}^\infty \oplus_{i=0}^p \mathbb Q$ with coordinates $\beta_{i,j}(M)$.  The set of graded $S$-modules is a semigroup under the operation of direct sum, and the vector space is a semigroup under addition.  By observing that $\beta(M\oplus M')=\beta(M)+\beta(M')$, we can think of $\beta$ as a map of semigroups:
$$\{\text{ fin. gen'd graded } S-\text{modules} \} \rTo^{\beta} \bigoplus_{j=-\infty}^\infty \bigoplus_{i=0}^p \mathbb Q$$
The image of this map is thus a semigroup.  Furthermore, if we restrict $\beta$ to any subsemigroup of $S$-modules, then the image of the restricted map is also a semigroup.

A \defi{degree sequence} will mean an integral sequence $d=(d_0, \dots, d_p)\in \mathbb N^{p+1}$ where $d_i<d_{i+1}$.  If there exists a Cohen-Macaulay module $M$ of codimension $p$ with all Betti numbers equal to zero except for $\beta_{i,d_i}(M)$, then we say that $\beta(M)$ is a \defi{pure diagram of type} $d$.  It was first shown in \cite{herz-kuhl} that any two pure diagrams of type $d$ would be scalar multiples of one another.  The existence of modules whose Betti diagrams are pure diagrams of type $d$ was conjectured by \cite{boij-sod} and proven by \cite{efw} in characteristic $0$ and by \cite{eis-schrey} in arbitrary characteristic.  These pure diagrams play a central role in the Boij-S\"oderberg theorems.

Fix two degree sequences $\underline{d}$ and $\overline{d}$ of length $p$ and such that $\underline{d}_i\leq \overline{d}_i$ for all $i$.  Consider the semigroup $\mathcal Z$ of graded modules $M$ which satisfy the properties:
\begin{itemize}
    \item  $M$ has projective dimension $\leq p$
    \item  The Betti number $\beta_{i,j}(M)$ is nonzero only if $i\leq p$ and $\underline{d_i}\leq j \leq \overline{d}_i$.
\end{itemize}
Our choice of $\mathcal Z$ is meant to match the simplicial structure of the cone of Betti diagrams.  We may now define our main objects of study.
\begin{defn}\label{defn:semigroup}
The \defi{semigroup of Betti diagrams} $\Bmod$ is defined as:
$$\Bmod=\Bmod(\underline{d}, \overline{d}):=\im \beta|_{\mathcal Z}$$
\end{defn}

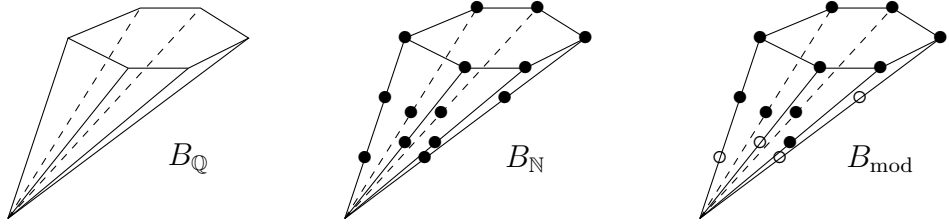
\begin{figure}
\begin{tikzpicture}[scale=0.8]
\draw[-](0,0)--(1,3);
\draw[-](0,0)--(2,2.5);
\draw[-](0,0)--(3,2.5);
\draw[-](0,0)--(4,3);
\draw[dashed,-](0,0)--(3.2,3.5);
\draw[dashed,-](0,0)--(2.2,3.5);
\draw[-](1,3)--(2,2.5)--(3,2.5)--(4,3)--(3.2,3.5)--(2.2,3.5)--cycle;
\draw (3,1) node {$B_{\mathbb Q}$};
\end{tikzpicture}
\hspace{1cm}
\begin{tikzpicture}[scale=0.8]
\draw[-](0,0)--(1,3);
\draw (.33,1) node {$\bullet$};
\draw (.67,2) node {$\bullet$};
\draw (1,3) node {$\bullet$};
\draw[-](0,0)--(2,2.5);
\draw (1,1.25) node {$\bullet$};
\draw (2,2.5) node {$\bullet$};
\draw[-](0,0)--(3,2.5);
\draw (1.5,1.25) node {$\bullet$};
\draw (3,2.5) node {$\bullet$};
\draw[-](0,0)--(4,3);
\draw (1.33,1) node {$\bullet$};
\draw (2.66,2) node {$\bullet$};
\draw (4,3) node {$\bullet$};
\draw[dashed,-](0,0)--(3.2,3.5);
\draw (1.6,1.75) node {$\bullet$};
\draw (3.2,3.5) node {$\bullet$};
\draw[dashed,-](0,0)--(2.2,3.5);
\draw (1.1,1.75) node {$\bullet$};
\draw (2.2,3.5) node {$\bullet$};
\draw[-](1,3)--(2,2.5)--(3,2.5)--(4,3)--(3.2,3.5)--(2.2,3.5)--cycle;
\draw (3,1) node {$\BN$};
\end{tikzpicture}
\hspace{1cm}
\begin{tikzpicture}[scale=0.8]
\draw[-](0,0)--(1,3);
\draw (.33,1) node {$\circ$};
\draw (.67,2) node {$\bullet$};
\draw (1,3) node {$\bullet$};
\draw[-](0,0)--(2,2.5);
\draw (1,1.25) node {$\circ$};
\draw (2,2.5) node {$\bullet$};
\draw[-](0,0)--(3,2.5);
\draw (1.5,1.25) node {$\bullet$};
\draw (3,2.5) node {$\bullet$};
\draw[-](0,0)--(4,3);
\draw (1.33,1) node {$\circ$};
\draw (2.66,2) node {$\circ$};
\draw (4,3) node {$\bullet$};
\draw[dashed,-](0,0)--(3.2,3.5);
\draw (1.6,1.75) node {$\bullet$};
\draw (3.2,3.5) node {$\bullet$};
\draw[dashed,-](0,0)--(2.2,3.5);
\draw (1.1,1.75) node {$\bullet$};
\draw (2.2,3.5) node {$\bullet$};
\draw[-](1,3)--(2,2.5)--(3,2.5)--(4,3)--(3.2,3.5)--(2.2,3.5)--cycle;
\draw (3,1) node {$\Bmod$};
\end{tikzpicture}
\label{fig:semigroups}
\caption{The cone of Betti diagrams $B_{\mathbb Q}$ is a simplicial fan which is described explicitly in \cite{eis-schrey} and \cite{boij-sod2}.  This explicit description can be used to understand the integral structure of the semigroup of virtual Betti diagrams $\BN$.  The semigroup of Betti diagrams $\Bmod$ is more mysterious.}
\end{figure}
In order to study the semigroup of Betti diagrams, it will be useful to consider two related objects:
\begin{defn}
The \defi{cone of Betti diagrams} $B_{\mathbb Q}$ is the positive rational cone over the semigroup of Betti diagrams.
The \defi{semigroup of virtual Betti diagrams} $\BN$ is the semigroup of lattice points in $B_{\mathbb Q}$.
\end{defn}
One could define a cone of Betti diagrams without restricting which Betti numbers can be nonzero.  This is the choice that \cite{eis-schrey} make, and our cone of Betti diagrams equals this big cone of \cite{eis-schrey} restricted to an interval.  We choose to work with a finite dimensional cone in order to discuss the finiteness properties of $\Bmod$.

A naive hope would be that the semigroups $\BN$ and $\Bmod$ are equal.  But a quick search yields virtual Betti diagrams which cannot equal the Betti diagram of module.  Take for example the following pure diagram of type $(0,1,3,4)$:
\begin{equation}\label{eqn:oldfavorite}
D_1:=\pi_{(0,1,3,4)}=\begin{pmatrix}1&2&-&-\\ -&-&2&1  \end{pmatrix}
\end{equation}
This diagram belongs to the semigroup of virtual Betti diagrams.  However, $D_1$ cannot equal the Betti diagram of an actual module as the two first syzygies would satsify a linear Koszul relation which does not appear in the diagram $D_1$.

It is thus natural to compare $\Bmod$ and $\BN$, and we will consider the following questions about the semigroup of Betti diagrams:

\begin{enumerate}
    \item[(Q1)]\label{question:fingen}  Is $\Bmod$ finitely generated?
    \item[(Q2)]   Does $\Bmod=\BN$ in some special cases?
    \item[(Q3)]   Is $\Bmod$ a saturated semigroup?
    \item[(Q4)]   Is $\BN\setminus \Bmod$ a finite set?
    \item[(Q5)]   On a single ray, can we have consecutive points of $\BN$ which fail to belong to $\Bmod$?  Nonconsecutive points?
\end{enumerate}

In Section \ref{sec:fingen}, we answer (Q\ref{question:fingen}) affirmatively:
\begin{thm}\label{thm:fingen}
The semigroup of Betti diagrams $\Bmod$ is finitely generated.
\end{thm}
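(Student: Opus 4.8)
The plan is to sandwich $\Bmod$ between two finitely generated semigroups. Since $\BQ$ is a rational polyhedral cone, Gordan's lemma shows that $\BN$ is finitely generated, and $\Bmod\subseteq\BN$. I would then establish two things: (i) there is a single positive integer $N$ with $N\cdot\BN\subseteq\Bmod$; and (ii) a general semigroup lemma, namely that any subsemigroup $S$ of a finitely generated subsemigroup $T$ of $\mathbb Z^r$ with $NT\subseteq S$ for some $N\ge 1$ is itself finitely generated. Applying (ii) with $T=\BN$ and $S=\Bmod$ then finishes the proof.

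To prove the semigroup lemma (ii) I would pass to semigroup rings over $k$. The ring $k[T]$ is Noetherian (as $T$ is finitely generated), and so is $k[NT]\cong k[T]$. For each $t\in T$ the monomial $\chi^t\in k[T]$ satisfies $(\chi^t)^N=\chi^{Nt}\in k[NT]$, hence is integral over $k[NT]$; applying this to a finite semigroup generating set of $T$ shows $k[NT]\subseteq k[T]$ is module-finite, so $k[T]$ is a Noetherian $k[NT]$-module. The monomial $k[NT]$-submodule $k[S]\subseteq k[T]$ is therefore finitely generated, and since it is a monomial submodule we may take monomial generators; this says precisely that $S=\bigcup_{\alpha=1}^{s}(u_\alpha+NT)$ for finitely many $u_\alpha\in S$. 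Since $NT\subseteq S$, the union of $\{u_1,\dots,u_s\}$ with a finite semigroup generating set of $NT$ generates $S$.

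For (i) I would use the simplicial structure from Boij--S\"oderberg theory. The cone $\BQ=\BQ(\underline d,\overline d)$ is a finite union of simplicial subcones $\sigma$, each spanned by pure diagrams $\pi_e$ indexed by a chain of degree sequences $e$ lying between $\underline d$ and $\overline d$. By the existence theorem of \cite{efw} and \cite{eis-schrey}, each such $\pi_e$ admits a positive integer multiple $\beta(M_e)=c_e\pi_e$ realized by a Cohen--Macaulay module; because the relevant degree sequences lie in $[\underline d,\overline d]$, each $M_e$ (and any direct sum of them) lies in $\mathcal Z$, so these realizable diagrams lie in $\Bmod$. Fix $\sigma$ and let $v_0,\dots,v_q$ be the realizable diagrams $\beta(M_e)$ spanning it. They are linearly independent integral vectors, so $\mathbb Z v_0+\dots+\mathbb Z v_q$ has finite index $m_\sigma$ in $L_\sigma$, the lattice of integral Betti diagrams intersected with $\operatorname{span}\sigma$. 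Since $\BN\cap\sigma=L_\sigma\cap\sigma$, any $D\in\BN\cap\sigma$ gives $m_\sigma D=\sum_j a_j v_j$ with $a_j\in\mathbb Z$, and $a_j\ge 0$ because $m_\sigma D\in\sigma$ and the $v_j$ form a basis of $\operatorname{span}\sigma$; hence $m_\sigma D=\beta\bigl(\bigoplus_j M_j^{\oplus a_j}\bigr)\in\Bmod$. Taking $N$ to be a common multiple of the finitely many $m_\sigma$ yields $N\cdot\BN\subseteq\Bmod$.

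The step I expect to be the main obstacle is (i), and specifically the need to exploit simpliciality of the Boij--S\"oderberg fan: the pure diagrams are far too numerous and linearly dependent to clear denominators in a single global decomposition, so one must work chamber by chamber, where the relevant pure diagrams form a basis. This is also the reason the semigroup $\mathcal Z$ was set up to mirror the chamber structure of the cone --- one genuinely needs the realizing modules $M_e$ to lie in $\mathcal Z$. The semigroup lemma (ii) is the secondary point requiring attention, since subsemigroups of finitely generated semigroups are not finitely generated in general; the hypothesis $NT\subseteq S$, supplied by (i), is exactly what rescues the situation.
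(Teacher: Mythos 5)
Your proof is correct, and it follows the same basic skeleton as the paper's argument: decompose the cone into simplices, use the Eisenbud--Fl\o ystad--Weyman / Eisenbud--Schreyer existence theorem to get a realizable integer multiple of each pure diagram spanning a simplex, and then pass to semigroup rings and use a module-finiteness argument. The difference is in how the sandwich is set up. The paper, working inside a single simplex $\Delta$ spanned by $\pi_{d^0},\dots,\pi_{d^s}$, lets $\mathcal S_1$ be the free semigroup on the realizable multiples $c_i\pi_{d^i}$ and $\mathcal S_0$ the free semigroup on $\tfrac{1}{m_\Delta}\pi_{d^i}$ (where $m_\Delta$ is a universal denominator for $\BN\cap\Delta$), obtaining $\mathcal S_1\subseteq \Bmod\cap\Delta\subseteq \BN\cap\Delta\subseteq\mathcal S_0$; here $k[\mathcal S_1]\subseteq k[\mathcal S_0]$ is literally an extension of polynomial rings of the form $k[y_i^{c_im_\Delta}]\subseteq k[y_i]$, visibly finite, and the sandwiched ring $k[\Bmod\cap\Delta]$ is then a finite $k[\mathcal S_1]$-module. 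You instead first establish the stronger statement $N\cdot\BN\subseteq\Bmod$ for a single $N$, and then invoke a general semigroup lemma that a subsemigroup $S$ of a finitely generated $T\subseteq\mathbb Z^r$ with $NT\subseteq S$ is finitely generated. Both arguments are sound; yours is a bit longer because proving $N\cdot\BN\subseteq\Bmod$ requires the lattice-index decomposition $m_\sigma D=\sum a_jv_j$, whereas the paper only needs the trivial inclusion $\mathcal S_1\subseteq\Bmod$. On the other hand your approach cleanly isolates a reusable abstract lemma (ii) and as a byproduct gives the quantitative statement that $N\cdot\BN\subseteq\Bmod$, which the paper's proof does not directly yield.
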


Sections \ref{sec:brobs} and \ref{sec:linstrand} of this paper develop obstructions which prevent a virtual Betti diagram from being the diagram of some module. These obstructions are our tools for answering the other questions above.  In section \ref{sec:specialcases}, we consider (Q2), and prove the following:
\begin{prop}\label{prop:specialcases}
$\BN=\Bmod$ for projective dimension $1$ and for projective dimension $2$ level modules.
\end{prop}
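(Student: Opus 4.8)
The plan is to handle the two cases separately; in each the idea is to produce enough explicitly realizable diagrams and then show they generate, under addition, all the lattice points of the relevant cone.

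\emph{Projective dimension $1$.} I would first note that the primitive (minimal integral) pure diagram of type $(d_0,d_1)$ has both of its nonzero entries equal to $1$: the single Herzog--K\"uhl equation in codimension $1$ is $\beta_0=\beta_1$ (see \cite{herz-kuhl}), so this diagram is $\beta\bigl(S(-d_0)/(f)\bigr)$ for a form $f$ of degree $d_1-d_0>0$, and thus every pure diagram of projective dimension $1$ is realized by a cyclic module. Now take a lattice point $D\in\BN$ of projective dimension $\le 1$ and run the Boij--S\"oderberg decomposition algorithm. At the first step it peels off $\pi_\tau$ with $\tau=\bigl(\min\{j:\beta_{0,j}(D)\ne 0\},\,\min\{j:\beta_{1,j}(D)\ne 0\}\bigr)$, and because the two nonzero entries of the primitive $\pi_\tau$ are $1$ one checks that the coefficient equals $\min\bigl(\beta_{0,\tau_0}(D),\beta_{1,\tau_1}(D)\bigr)$, a positive integer. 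The remainder is again an integral point of $B_{\mathbb Q}$ of projective dimension $\le 1$ (this is exactly what the decomposition algorithm guarantees) with strictly smaller total Betti number, so by induction the entire Boij--S\"oderberg decomposition of $D$ has nonnegative integer coefficients. Then $D$ is the Betti diagram of the corresponding direct sum of cyclic modules, which lies in $\mathcal Z$, so $D\in\Bmod$; the inclusion $\Bmod\subseteq\BN$ is automatic.

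\emph{Projective dimension $2$ level modules.} Here the same scheme breaks down, because the Boij--S\"oderberg coefficients of an integral level diagram of codimension $2$ need not be integral: for instance the complete intersection $S/(\ell,q)$ with $\deg\ell=1,\ \deg q=2$ has Betti diagram $\tfrac13\pi_{(0,1,3)}+\tfrac13\pi_{(0,2,3)}$. So I would work with a larger stock of realizable ``atoms'': (a) the pure modules of type $(0,e,D)$, which exist by \cite{efw} and \cite{eis-schrey} and are automatically level, since a pure resolution has pure top; and (b) quotients $S/I$ by height-$2$ perfect ideals with pure top, together with their duals $\operatorname{Ext}^2_S(-,S)$ --- these are level of codimension $2$, and by Hilbert--Burch each is realized by the ideal of maximal minors of a sufficiently general matrix with prescribed row and column degrees, the genericity making the Hilbert--Burch complex minimal. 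It then suffices to prove that every lattice point $D$ of the level codimension-$2$ cone is a nonnegative integer combination of the Betti diagrams of atoms of types (a) and (b); realizing the summands by modules with a common top degree and taking the direct sum then produces a level module with Betti diagram $D$.

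The routine case is projective dimension $1$; the main obstacle is this last generation statement for projective dimension $2$ level modules. Writing a Boij--S\"oderberg decomposition $D=\sum_k c_k\pi_{\tau_k}$ and subtracting off its integer part $\sum_k\lfloor c_k\rfloor\pi_{\tau_k}$ (a combination of atoms of type (a)) leaves an integral residual diagram whose entries are bounded in terms of the chosen window $(\underline d,\overline d)$, and the crux is to show that every such residual is a sum of the determinantal atoms in (b). I expect this to require an explicit peeling argument specific to codimension $2$ --- where there is a single ``middle'' homological degree, so the cone of level diagrams has a particularly simple combinatorial shape --- together with a verification that the genericity needed to keep a Hilbert--Burch matrix's complex minimal, with no unexpected syzygies or cancellation, is compatible with the degree bounds coming from the facets of $B_{\mathbb Q}$. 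This compatibility, rather than the mere existence of the atoms, is where the real work lies.
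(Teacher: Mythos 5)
Your projective dimension $1$ argument is essentially the paper's: both realize a codimension-$1$ Cohen--Macaulay diagram as a direct sum of cyclic pure pieces $\pi_{(\alpha_i,\gamma_i)}$, the paper by directly pairing the sorted generator degrees with the sorted syzygy degrees and checking $\alpha_i+1\le\gamma_i$, you by running the Boij--S\"oderberg peeling and observing the coefficients stay integral because primitive codimension-$1$ pure diagrams have entries $1$. (You should dispose of any free summand first, as the paper does by reducing to the Cohen--Macaulay case, but this is routine.)

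For projective dimension $2$ level modules, your proposal is an outline rather than a proof. You plan to build a stock of realizable ``atoms'' (pure resolutions, Hilbert--Burch determinantal modules and their duals) and then show that every lattice point of the level cone is a nonnegative integer combination of atoms --- but you explicitly defer the generation statement (``I expect this to require an explicit peeling argument\ldots where the real work lies''). That deferred step is precisely the content of the proposition, and your own example $\tfrac13\pi_{(0,1,3)}+\tfrac13\pi_{(0,2,3)}$ already shows the residual combinatorics is delicate. The paper avoids peeling altogether by a scaling argument through Hilbert functions: S\"oderberg's theorem in \cite{sod-codim2} characterizes Hilbert functions of embedding-dimension-$2$ level modules by a homogeneous linear (concavity) inequality on the second differences of $\vec{h}$. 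Given $D\in\BN$, some $cD$ lies in $\Bmod$, so $c\,\vec{h}(D)$ satisfies the inequality; dividing by $c$, so does $\vec{h}(D)$, whence S\"oderberg supplies a level module $M$ with Hilbert function $\vec{h}(D)$, and for such modules the Hilbert function forces $\beta(M)=D$. The fact that the obstruction is linear and homogeneous is what lets it pass from $cD$ down to $D$; to finish your route you would have to actually prove the generation claim, which looks harder than the proposition itself.
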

Our proof of Proposition \ref{prop:specialcases} rests heavily on \cite{sod-codim2}, which shows the existence of level modules of embedding dimension $2$ and with a given Hilbert function by constructing these modules as quotients of monomial ideals.

In \cite{erman-thesis} we verify that, in a certain sense, projective dimension $2$ diagrams generated in a single degree are ``unobstructed.''  This leads us to conjecture:
\begin{conj}\label{conj:codim2}
$\BN=\Bmod$ for projective dimension $2$ diagrams.
\end{conj}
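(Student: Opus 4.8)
The plan would be to first reduce to two variables. By the Auslander--Buchsbaum formula a graded module $M$ over $S = k[x_1,\dots,x_n]$ of projective dimension $\le 2$ has depth $\ge n-2$, so $n-2$ general linear forms form a regular sequence on $M$ and reduction modulo them yields a graded $k[x,y]$-module $\bar M$ with $\beta(\bar M) = \beta(M)$. Conversely, if $N$ is a graded $k[x,y]$-module realizing a virtual diagram $D$, then the $S$-module $N \otimes_k k[x_3,\dots,x_n]$ still realizes $D$, since $x_3,\dots,x_n$ is a regular sequence on it. Hence $\Bmod(\underline d,\overline d)$ and $\BN(\underline d,\overline d)$ are unchanged if we replace $S$ by $k[x,y]$, and it suffices to show that every lattice point of the projective-dimension-$2$ cone $\BQ$ is the Betti diagram of a graded $k[x,y]$-module, necessarily of depth $0$.

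Over $k[x,y]$ I would argue by induction on the complexity of a virtual diagram $D$. Writing $\BQ$ as the simplicial fan of Boij-S\"oderberg theory, $D$ decomposes uniquely as $D = \sum_{\ell=0}^m a_\ell\,\pi_{d^{(\ell)}}$ along a chain $d^{(0)} < \cdots < d^{(m)}$ of degree sequences with $a_\ell \in \mathbb Q_{>0}$. The base cases are available: pure diagrams by \cite{efw} and \cite{eis-schrey}, diagrams with a single generator degree by \cite{erman-thesis}, and level diagrams by \cite{sod-codim2}, which is Proposition \ref{prop:specialcases}. Whenever some $a_\ell \ge 1$ one can peel that ray off: realize a suitable number of copies of a Cohen--Macaulay module on the ray, realize the remainder by induction, and take a direct sum, whose Betti diagram is exactly $D$ since Betti numbers add under direct sums. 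The crux is therefore the case in which every $a_\ell$ lies in $(0,1)$, so that $D$ is not an $\mathbb N$-combination of the primitive pure diagrams $\pi_{d^{(\ell)}}$; in particular one must remove the single-generator-degree hypothesis of \cite{erman-thesis}.

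In that crux case direct sums cannot help, because the simplicial cone spanned by the $\pi_{d^{(\ell)}}$ need not be unimodular --- this is the saturation question (Q3) --- and this is why the statement is only a conjecture. To handle it I would attempt to realize an enlarged diagram $\widetilde D = D + \delta$, where $\delta$ is a sum of ``consecutive Koszul'' correction terms, that is, a single unit placed in columns $i-1$ and $i$ at a degree where $D$ leaves room, of exactly the type that forces the cancellation exhibited for $D_1$ in \eqref{eqn:oldfavorite}; one would build $\widetilde D$ either as a direct sum of smaller realizable pieces or by an explicit monomial-ideal construction generalizing \cite{sod-codim2}, and then produce a module with Betti diagram equal to $D$ as a subquotient of that realization through a codimension-$2$ linkage or mapping-cone argument arranged so that precisely $\delta$ cancels. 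The main obstacle is to show that such a $\delta$ can always be chosen so that the enlarged diagram is realizable, the intermediate Hilbert functions are attainable, and the mapping cone cancels exactly $\delta$ and nothing more; carrying this out is where the abundant codimension-$2$ complete intersections and the monomial resolutions of \cite{sod-codim2} would have to be exploited.
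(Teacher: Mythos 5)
Note first that the statement you are attempting to prove is Conjecture \ref{conj:codim2}; the paper offers \emph{no} proof. The only support given is Remark \ref{rmk:unobs}, citing \cite{erman-thesis}, to the effect that projective dimension $2$ diagrams generated in a single degree carry none of the Buchsbaum--Rim obstructions of Proposition \ref{prop:BRobstructions}. Being ``unobstructed'' there means only that one family of necessary conditions is satisfied; it is not an existence statement. So there is no argument in the paper for you to reconstruct or deviate from, and any complete proof would be a new result.

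As for the plan itself: the reduction to $k[x,y]$ via Auslander--Buchsbaum and flat extension of scalars is correct, and the peeling step is sound in projective dimension $2$, since each first lattice point $\pi_d$ is realizable there (a pure diagram is in particular level, so Proposition \ref{prop:codim2level} applies), and subtracting it when a Boij--S\"oderberg coefficient $a_\ell \ge 1$ keeps you in $\BN$ with strictly smaller total size. However, two of your proposed base cases are miscited: \cite{efw} and \cite{eis-schrey} realize \emph{some} integer multiple of $\pi_d$, not $\pi_d$ itself, and \cite{erman-thesis} is invoked by the paper only for absence of Buchsbaum--Rim obstructions, not for realizability of single-generator-degree diagrams. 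The decisive gap, which you acknowledge, is the crux case in which every $a_\ell$ lies strictly in $(0,1)$: your proposal to realize $D+\delta$ for a Koszul-type correction $\delta$ and then cancel exactly $\delta$ by linkage or a mapping cone is stated as a hope, with no argument that such a $\delta$ exists, that $D+\delta \in \Bmod$, or that the cancellation can be controlled to remove precisely $\delta$. That is exactly the open content of the conjecture, so what you have written is a reasonable sketch of where the difficulty lies rather than a proof.
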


In the final section, we will consider questions (Q3-Q5).  Here we show that the semigroup of Betti diagrams can have rather complicated behavior (see also Figure \ref{fig:rays}):
\begin{thm}\label{thm:negative}
Each of the following occurs in the semigroup of Betti diagrams:
\begin{enumerate}
    \item\label{thm:negative:sat}   $\Bmod$ is not necessarily a saturated semigroup.
    \item\label{thm:negative:infty}   The set $|\BN\setminus \Bmod|$ is not necessarily finite.
    \item\label{thm:negative:consec}   There exist rays of $\BN$ which are missing at least $(\dim S-2)$ consecutive lattice points.
    \item\label{thm:negative:nonconsec}   There exist rays of $\BN$ where the points of $\Bmod$ are nonconsecutive lattice points.
\end{enumerate}
\end{thm}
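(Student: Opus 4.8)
The plan is to extract the four assertions from the obstructed families built in Sections~\ref{sec:brobs} and~\ref{sec:linstrand}, together with positive realizability coming from the pure resolutions of \cite{efw,eis-schrey} and from explicit complexes.

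Assertions~\eqref{thm:negative:sat} and~\eqref{thm:negative:infty} come from the family $E_\alpha$ of the introduction. For~\eqref{thm:negative:infty}: the diagrams $E_\alpha$, $\alpha\in\mathbb N$, all have the same set of nonzero positions, so for a suitable $\underline d\le\overline d$ they all lie in one semigroup $\BN(\underline d,\overline d)$; each is integral and lies in $B_{\mathbb Q}$, hence $E_\alpha\in\BN$; and none lies in $\Bmod$, by the Buchsbaum--Eisenbud multiplier-ideal argument of Section~\ref{sec:linstrand}. Thus $\{E_\alpha\}_{\alpha\in\mathbb N}\subseteq\BN\setminus\Bmod$ is infinite. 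For~\eqref{thm:negative:sat}: fix one $\alpha$; then $3E_\alpha\in\Bmod$ (the triple is realized by an explicit module) while $E_\alpha\notin\Bmod$, and $E_\alpha$ lies in the group generated by $\Bmod$ (it is a $\mathbb Z$-combination of Betti diagrams of genuine modules), so $\Bmod$ is not saturated.

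For~\eqref{thm:negative:consec} and~\eqref{thm:negative:nonconsec} I would pass to a ray through a primitive pure diagram $\pi_d$ and study $T(\pi_d):=\{c\in\mathbb N:c\,\pi_d\in\Bmod\}$. Since $\beta(M\oplus M')=\beta(M)+\beta(M')$, the set $T(\pi_d)$ is a submonoid of $(\mathbb N,+)$, nonzero by \cite{efw,eis-schrey}; the lattice points of $\BN$ on this ray are $\{c\,\pi_d:c\in\mathbb N\}$, while the points of $\Bmod$ on it are $\{c\,\pi_d:c\in T(\pi_d)\}$. So~\eqref{thm:negative:consec} asks for a $\pi_d$ with $\min\bigl(T(\pi_d)\setminus\{0\}\bigr)\ge\dim S-1$, and~\eqref{thm:negative:nonconsec} asks for a $\pi_d$ possessing two elements of $T(\pi_d)$ that differ by more than $1$ with no element of $T(\pi_d)$ strictly between. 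The prototype for~\eqref{thm:negative:nonconsec} is $D_1=\pi_{(0,1,3,4)}$: a forced linear Koszul relation gives $1\notin T(D_1)$, the Buchsbaum--Rim complex of a generic $S(-1)^4\to S^2$ gives $2\in T(D_1)$ (hence $2\mathbb N\subseteq T(D_1)$), and running the single diagram $3D_1$ against the obstructions of Section~\ref{sec:brobs} to conclude $3\notin T(D_1)$ leaves $\Bmod$ meeting this ray in the nonconsecutive set $\{0,2D_1,4D_1,\dots\}$. For~\eqref{thm:negative:consec} I would, for each $n=\dim S$, choose a pure diagram $\pi_d$ whose degree sequence has a wide initial gap, so that the exterior-algebra structure on a minimal free resolution together with the Buchsbaum--Rim numerics forces every module whose Betti diagram is a multiple of $\pi_d$ to have multiplicity at least $n-1$; then $\pi_d,2\pi_d,\dots,(n-2)\pi_d\notin\Bmod$, which is a run of $\dim S-2$ consecutive lattice points on the ray.

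The main obstacle is the obstruction direction: showing that prescribed small multiples $c\,\pi_d$ are \emph{not} Betti diagrams of any module. Membership in $\BN$ is a finite linear-algebra computation, but non-membership in $\Bmod$ must rule out all modules with a given diagram, and this is where the structural tools of Sections~\ref{sec:brobs} and~\ref{sec:linstrand}---the Buchsbaum--Eisenbud exterior-algebra structure on a minimal free resolution, Buchsbaum--Eisenbud multiplier ideals, and the ranks and degrees forced by the Buchsbaum--Rim complex---have to be pushed far enough to give bounds that scale with $\dim S$ (for~\eqref{thm:negative:consec}) and that are sharp enough to leave a genuine gap (for~\eqref{thm:negative:nonconsec}). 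Matching those obstructions against an explicit complex realizing the next available multiple is what makes~\eqref{thm:negative:consec} and~\eqref{thm:negative:nonconsec} come out with exactly the stated shape.
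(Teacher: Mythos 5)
Your framework---interpreting each ray as a numerical submonoid $T(\pi_d)\subseteq\mathbb N$ and building the four conclusions from the obstruction theory of Sections~\ref{sec:brobs} and~\ref{sec:linstrand}---matches the paper's strategy in outline, and parts~\eqref{thm:negative:infty} and~\eqref{thm:negative:consec} follow the paper's route (the $E_\alpha$ family for infinitude, and a pure degree sequence with a wide initial gap together with the codimension count of Proposition~\ref{prop:BRobstructions}\eqref{prop:BR:codim} for the run of consecutive missing points). For~\eqref{thm:negative:sat} you use $E_\alpha$ where the paper uses $D_1=\pi_{(0,1,3,4)}$; your argument would be fine if $3E_\alpha\in\Bmod$ were verified, but it never is---the paper asserts this only as motivation in the introduction and then proves~\eqref{thm:negative:sat} with $D_1$, exhibiting an explicit $6\times3$ presentation matrix over $k[x,y,z]$ whose cokernel realizes $3D_1$.

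The genuine gap is in part~\eqref{thm:negative:nonconsec}. You propose to use the ray through $D_1=\pi_{(0,1,3,4)}$ and to ``run $3D_1$ against the obstructions of Section~\ref{sec:brobs}'' to get $3\notin T(D_1)$. This fails for two reasons. First, $3D_1=\left(\begin{smallmatrix}3&6&-&-\\-&-&6&3\end{smallmatrix}\right)$ has none of the Buchsbaum--Rim obstructions: here $a=3$, $b=6$, $e=3$, so $b\ge e+a-1=5$, the second-syzygy bound $\underline{d_2}\le\sum_{\ell=1}^{a+1}j_\ell=4$ holds since $\underline{d_2}=3$, and the regularity bound $\overline{d_e}=4\le5$ holds too. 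Second, and decisively, $3D_1$ actually \emph{is} a Betti diagram---the paper's proof of part~\eqref{thm:negative:sat} constructs the module explicitly. So $T(D_1)=\{0,2,3,4,\dots\}$, which is missing only $1$ and is therefore a non-saturation example, not a nonconsecutive one. The paper instead takes $D_2=\left(\begin{smallmatrix}2&4&3&-\\-&3&4&2\end{smallmatrix}\right)$, proves $D_2\notin\Bmod$ and $2D_2\in\Bmod$ (via $N\oplus N^\vee(4)$ for $N=k[x,y,z]/(x,y,z)^2$), and then shows $3D_2\notin\Bmod$ by a considerably harder argument: the maximal-minor obstruction of Proposition~\ref{prop:maxminobs} gives $\rank T_1+\rank U_3\le9$, hence WLOG $\rank T_1\le4$, and then Lemma~\ref{lem:secondsyz} (a Kronecker-normal-form analysis after reducing modulo one variable) plus the classification of low-rank matrices of linear forms from \cite{eis-har-vecs} rules out every possible $T_1$. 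None of this is reachable from the Buchsbaum--Rim obstructions alone, which is precisely why the paper developed the Section~\ref{sec:linstrand} machinery. You would need to swap $D_1$ for $D_2$ and supply the argument that $3D_2\notin\Bmod$ for part~\eqref{thm:negative:nonconsec} to hold.
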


\begin{figure}\label{fig:rays}
\begin{tikzpicture}[scale=1.2]
	\draw[-,thick] (0,0)node{$\bullet$} --  (1.5,.75);
	\draw[dotted,-,thick](1.5,.75) -- (2,1);
	\draw (0,0)node[below right] {$0$};
	\draw[->,thick] (2,1)--  (3,1.5);
	\draw (.5,.25) node{$\circ$};
	\draw (1,.5) node{$\bullet$};
	\draw (1.5,.75) node{$\bullet$};
	\draw (2,1) node{$\bullet$};
	\draw (2.5,1.25) node{$\bullet$};
	\draw (1.3,-.5) node {{\bf Nonsaturated}};
\end{tikzpicture}
\hspace{.5cm}
\begin{tikzpicture}[scale=1.2]
	\draw[-,thick] (0,0)node{$\bullet$} --  (1.5,.75);
	\draw[dotted,-,thick](1.5,.75) -- (2,1);
	\draw (0,0)node[below right] {$0$};
	\draw[->,thick] (2,1)--  (3,1.5);
	\draw (.5,.25) node{$\circ$};
	\draw (1,.5) node{$\circ$};
	\draw (1.5,.75) node{$\circ$};
	\draw (2,1) node{$\circ$};
	\draw (2.5,1.25) node{$\bullet$};
	\draw (1.3,-.5) node {{\bf Missing Consecutive Points}};
\end{tikzpicture}
\hspace{.5cm}
\begin{tikzpicture}[scale=1.2]
	\draw[->,thick] (0,0)node{$\bullet$}  --  (2.5,1.25);
	\draw (0,0)node[below right] {$0$};
	\draw (.5,.25) node{$\circ$};
	\draw (1,.5) node{$\bullet$};
	\draw (1.5,.75) node{$\circ$};
	\draw (2,1) node{$\bullet$};
	\draw (1.3,-.5) node {{\bf Nonconsecutive Points}};
\end{tikzpicture}
\caption{There exist rays which exhibit each of the above behavior.}
\end{figure}
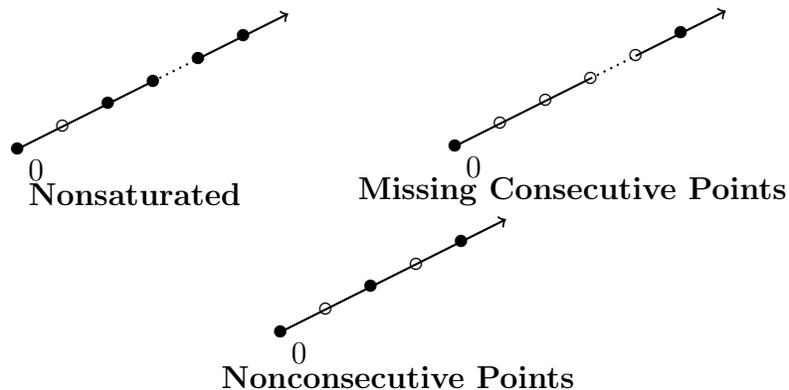

\begin{remark}
Almost nothing in this paper would be changed if we swapped the semigroup $\mathcal Z$ for some subsemigroup of $\mathcal Z$ which respects the simplicial structure of $\BQ$.  For instance, we could consider the subsemigroup of Cohen-Macaulay modules of codimension $e$.  The analogous statements of Theorems \ref{thm:fingen} and \ref{thm:negative} and Proposition \ref{prop:specialcases} all remain true in the Cohen-Macaulay case; one can even use the same proofs.
\end{remark}

This paper is organized as follows.  In Section \ref{sec:fingen}, we prove that the semigroup of Betti diagrams is finitely generated.  Sections \ref{sec:brobs} and \ref{sec:linstrand} introduce obstructions for a virtual Betti diagram to be the Betti diagram of some module. The obstructions in Section \ref{sec:brobs} are based on properties of the Buchsbaum-Rim complex, and the obstruction in Section \ref{sec:linstrand} focuses on the linear strand of a resolution and is based on the properties of Buchsbaum-Eisenbud multiplier ideals.  In Section \ref{sec:specialcases}, we consider the semigroup of Betti diagrams for small projective dimension, and we prove Proposition \ref{prop:specialcases}.  In Section \ref{sec:structure} we prove Theorem \ref{thm:negative} by constructing explicit examples based on our obstructions.  Finally, Section \ref{sec:questions} offers some open questions.

\section*{Acknowledgments}
I thank David Eisenbud for his suggestions and encouragement throughout my work on this project.  I also  thank Marc Chardin, Frank Schreyer, and the participants in the 2008 Minimal Resolutions conference at Cornell University for many useful discussions.  Finally, I thank the makers of Macaulay2 \cite{macaulay2} and Singular \cite{singular}.

\section{Finite Generation of the Semigroup of Betti Diagrams}\label{sec:fingen}
We fix a pair of degree sequences $\overline{d}, \underline{d}\in \mathbb N^{p+1}$ and work with the corresponding  semigroup of Betti diagrams $\Bmod$.  Our proof of the finite generation of the semigroup of Betti diagrams uses the structure of the cone of Betti diagrams, so we begin by reviewing the relevant results.  This structure was first proven in \cite{eis-schrey} for the Cohen-Macaulay case; the general case is similar, and was worked out in \cite{boij-sod2}.

If $d$ is any degree sequence then we set $\pi_d$ to be the first lattice point on the ray corresponding to $d$. As illustrated in Figure 3,
the cone $\BQ$ is a rational simplicial fan whose defining rays correspond to rays of pure diagrams.  To describe the simplicial structure, we recall the following partial ordering on degree sequences, introduced in \cite{boij-sod2}:
\begin{defn}
Let $d \in \mathbb N^{t+1}$ and $d'\in \mathbb N^{u+1}$.  Then $d \leq d'$ if $t\geq u$ and $d_i\leq d'_i$ for all $i\leq u$.
\end{defn}

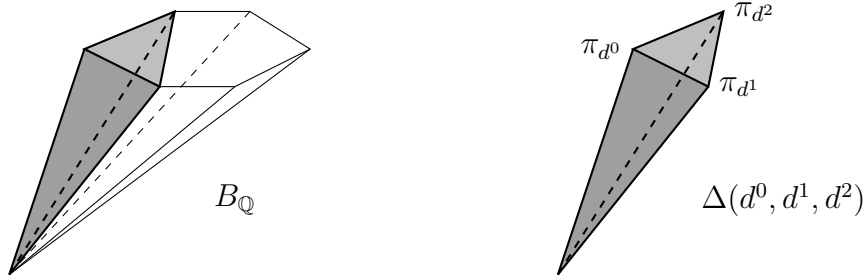
\begin{figure}\label{fig:simpl}
\begin{tikzpicture}[scale=1.0]
\fill[fill=gray,semitransparent](0,0)--(1,3)--(2.2,3.5);
\fill[fill=gray,semitransparent](0,0)--(1,3)--(2,2.5);
\fill[fill=gray,semitransparent](0,0)--(2,2.5)--(2.2,3.5);
\draw[-,thick](0,0)--(1,3);
\draw[-,thick](0,0)--(2,2.5);
\draw[-](0,0)--(3,2.5);
\draw[-](0,0)--(4,3);
\draw[dashed,-](0,0)--(3.2,3.5);
\draw[dashed,-,thick](0,0)--(2.2,3.5);
\draw[-,thick](1,3)--(2.2,3.5)--(2,2.5)--cycle;
\draw[-](1,3)--(2,2.5)--(3,2.5)--(4,3)--(3.2,3.5)--(2.2,3.5)--cycle;
\draw (3,1) node {$B_{\mathbb Q}$};
\end{tikzpicture}
\hspace{3cm}
\begin{tikzpicture}[scale=1.0]
\fill[fill=gray,semitransparent](0,0)--(1,3)--(2.2,3.5);
\fill[fill=gray,semitransparent](0,0)--(1,3)--(2,2.5);
\fill[fill=gray,semitransparent](0,0)--(2,2.5)--(2.2,3.5);
\draw[-,thick](0,0)--(1,3);
\draw  (1,3) node [left] {$\pi_{d^0}$};
\draw[-,thick](0,0)--(2,2.5);
\draw  (2,2.5) node [right] {$\pi_{d^1}$};
\draw[dashed,-,thick](0,0)--(2.2,3.5);
\draw  (2.2,3.5) node [right] {$\pi_{d^2}$};
\draw[-,thick](1,3)--(2.2,3.5)--(2,2.5)--cycle;
\draw (3,1) node {$\Delta(d^0, d^1, d^2)$};
\end{tikzpicture}
\caption{The cone $B_{\mathbb Q}$ is a simplicial fan.  The simplex corresponding to a maximal sequence $d^0,d^1,d^2$ is highlighted in gray.  The extremal rays of a simplex correspond to pure diagrams.}
\end{figure}
The simplices of the fan $\BQ$ correspond to maximal chains of degree sequences:
$$d^0 < d^1 < \dots < d^{s-1} < d^s$$
where if $d^j\in \mathbb N^{t+1}$ then $\underline{d}_i\leq d^j_i \leq \overline{d}_i$ for all $i\leq t$.  There are thus $s+1$ positions which may be nonzero for a Betti diagram in $\Bmod$ (see Example 1 of \cite{boij-sod2}).  In particular, $s+1=\sum_{i=0}^p \overline{d}_i-\underline{d}_i+1$.

Before proving Theorem \ref{thm:fingen}, we first prove a simpler analog for the semigroup of virtual Betti diagrams $\BN$.

\begin{lemma}
The semigroup $\BN$ is finitely generated.  There exists $m$ such every virtual Betti diagram can be written as a $\frac{1}{m}\mathbb N$-combination of pure diagrams.
\end{lemma}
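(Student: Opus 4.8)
The plan is to exploit the fact, just recalled, that $\BQ$ is a rational simplicial fan whose extremal rays are the rays spanned by the pure diagrams $\pi_d$. Since there are only finitely many degree sequences $d$ with $\underline d_i \le d_i \le \overline d_i$ (the number of admissible positions is the finite number $s+1$ computed above), there are only finitely many rays, hence only finitely many maximal simplices $\Delta(d^0 < \dots < d^s)$ in the fan. It therefore suffices to prove the statement one simplicial cone at a time: if every lattice point of a fixed simplicial cone $\Delta$ is a $\frac1{m_\Delta}\mathbb N$-combination of the pure diagrams spanning $\Delta$, then taking $m$ to be the least common multiple of the finitely many $m_\Delta$ gives the global statement, and the finitely many $\frac1m \pi_d$ that arise will generate $\BN$.

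So fix a maximal chain $d^0 < d^1 < \dots < d^s$ and the corresponding simplicial cone $\Delta = \Delta(d^0,\dots,d^s)$. By the Boij--S\"oderberg decomposition, each $\pi_{d^j}$ is taken to be the \emph{first} lattice point on its ray, so $\pi_{d^0},\dots,\pi_{d^s}$ are linearly independent primitive integral vectors, and any element of $\BN \cap \Delta$ is a nonnegative \emph{rational} combination $\sum_j c_j \pi_{d^j}$ with $c_j \in \mathbb Q_{\ge 0}$. The point is that a priori the $c_j$ need not be integers — the simplicial cone need not be unimodular — but they have bounded denominators. Concretely, the simplex $\Delta$ is the image of the standard cone under the integer matrix $P_\Delta$ whose columns are the $\pi_{d^j}$; if $m_\Delta$ is a common denominator for the entries of a one-sided inverse of $P_\Delta$ (for instance, $m_\Delta$ can be taken to be a nonzero maximal minor of $P_\Delta$, by Cramer's rule), then any integral vector in the column span of $P_\Delta$ is $P_\Delta$ applied to a vector in $\frac1{m_\Delta}\zz^{s+1}$; intersecting with the cone forces the entries to lie in $\frac1{m_\Delta}\mathbb N$. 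This gives the desired uniform denominator on each simplex.

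The main technical point to be careful about — and the step I expect to require the most attention — is the bookkeeping about which simplices a given lattice point lies in and the passage from ``rational combination with bounded denominators'' to ``$\frac1m\mathbb N$-combination.'' In particular one must know that the Boij--S\"oderberg decomposition of a point of $\BQ$ is unique once a maximal simplex containing it is chosen, so that the coefficients $c_j$ are genuinely determined by solving the linear system $P_\Delta c = \beta$; this is exactly the content of the simpliciality of the fan, which is cited from \cite{eis-schrey} and \cite{boij-sod2}, so it may be invoked directly. Once that is in hand, taking $m := \operatorname{lcm}_\Delta m_\Delta$ over the finitely many maximal simplices yields both assertions at once: every virtual Betti diagram is a $\frac1m\mathbb N$-combination of pure diagrams, and consequently $\BN$ is generated by the finite set $\{\,\frac1m\pi_d : \underline d_i \le d_i \le \overline d_i\,\}$.
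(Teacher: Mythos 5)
Your argument for the universal denominator is correct and essentially matches the paper's: on each maximal simplex $\Delta$ the matrix $P_\Delta$ with columns $\pi_{d^0},\dots,\pi_{d^s}$ is square and invertible, so Cramer's rule gives a bound $m_\Delta$ on the denominators of the Boij--S\"oderberg coefficients of any integral point of $\Delta$, and taking the least common multiple over the finitely many simplices gives $m$. That part is fine.

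The problem is your closing sentence: ``consequently $\BN$ is generated by the finite set $\{\frac{1}{m}\pi_d\}$.'' This cannot be right, and it is where the finite-generation claim is supposed to be established. The diagrams $\frac{1}{m}\pi_d$ are not lattice points (for $m>1$), so they do not lie in $\BN$ at all and cannot generate it. What you have actually shown is only the inclusion $\BN\cap\Delta \subseteq \frac{1}{m_\Delta}\mathbb N\langle \pi_{d^0},\dots,\pi_{d^s}\rangle$, i.e.\ that $\BN\cap\Delta$ is a \emph{sub}-semigroup of a free commutative monoid of rank $s+1$. That inclusion by itself does not give finite generation: sub-semigroups of $\mathbb N^{s+1}$ need not be finitely generated (for instance the sub-semigroup of $\mathbb N^2$ consisting of $(0,0)$ together with all $(a,b)$ with $a\ge 1$ requires the infinitely many generators $(1,b)$). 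So as written, the first assertion of the lemma is not proved.

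What is missing is the standard Gordan-type argument that the paper uses: $\BN\cap\Delta$ is not an arbitrary sub-semigroup but is precisely the set of lattice points of the rational simplicial cone $\Delta$, and such a semigroup is generated by the extremal integral generators $\pi_{d^0},\dots,\pi_{d^s}$ together with the (finitely many) lattice points lying in the fundamental parallelepiped $\{\sum_j c_j\pi_{d^j} : 0\le c_j<1\}$. Adding that one sentence closes the gap and makes your proof line up with the paper's; your denominator computation then supplies the second assertion exactly as the paper does.
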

\begin{proof}
Since $\BN$ consists of the lattice points of the simplicial fan $\BQ$, it is sufficient to prove this lemma after restricting to a single simplex $\Delta$.  Let $\pi_{d^0}, \dots, \pi_{d^s}$ be the pure diagrams defining $\Delta$.  Then the semigroup $\BN \cap \Delta$ is generated by pure diagrams spanning $\Delta$ and by the lattice points inside the fundamental parallelepiped of $\Delta$.  This proves the first claim.

For the second claim of the lemma, let $P_1, \dots, P_N$ be the minimal generators of $\BN\cap \Delta$.  Every generator can be written as a positive rational sum:
$$P_i=\sum_{j} \frac{p_{ij}}{q_{ij}} \pi_{d^j}, \quad p_{ij}, q_{ij}\in \mathbb N$$
We set $m_\Delta$ to be the least common multiple of all the $q_{ij}$.  Then we set $m$ to be the least common multiple of the $m_\Delta$ for all $\Delta$.
\end{proof}
We refer to $m_\Delta$ as a \defi{universal denominator for } $\BN \cap \Delta$.  The existence of this universal denominator is central to our proof of the finite generation of $\Bmod$.

\begin{proof}[Proof of Theorem \ref{thm:fingen}]
It is sufficient to prove the theorem for $\Bmod \cap \Delta$ where $\Delta$ is a simplex of $\BQ$.  Let $\pi_{d^0}, \dots, \pi_{d^s}$ be the pure diagrams defining $\Delta$, and let $m_\Delta$ be the universal denominator for $\BN \cap \Delta$.

For $i=0, \dots, s$, let $c_i\in \mathbb N$ be minimal such that $c_i\pi_{d^i}$ belongs to $\Bmod$.  The existence of such a $c_i$ is guaranteed by Theorems 0.1 and 0.2 of \cite{efw} and Theorem 5.1 of \cite{eis-schrey}.  Let $\mathcal S_1$ be the semigroup generated by the pure diagrams $c_i\pi_{d^i}$.   Let $\mathcal S_0$ be the semigroup generated by the pure diagrams $\frac{1}{m_\Delta}\pi_{d^i}$.  Then we have the following inclusions of semigroups:
$$\mathcal S_1 \subseteq \left(\Bmod\cap \Delta \right) \subseteq \left( \BN\cap \Delta\right) \subseteq \mathcal S_0$$
Passing to semigroup rings gives:
$$k[\mathcal S_1] \subseteq k[\Bmod\cap \Delta] \subseteq k[ \BN\cap \Delta] \subseteq k[\mathcal S_0]$$
Observe that $k[\mathcal S_1] $ and $k[\mathcal S_0]$ are both polynomial rings of dimension $s+1$, and that $k[\mathcal S_1]\subseteq k[\mathcal S_0]$ is a finite extension of rings.  This implies that $k[\mathcal S_1]\subseteq k[\Bmod\cap \Delta]$ is also a finite extension, and hence $k[\Bmod\cap \Delta]$ is a finitely generated $k$-algebra.  We conclude that $\Bmod\cap \Delta$ is a finitely generated semigroup.
\end{proof}

\subsection*{Computing Generators of $\BN$}
Minimal generators of $\BN\cap \Delta$ can be computed explicitly as the generators of the $\mathbb N$-solutions to a certain linear $\mathbb Z$-system defined by the $\pi_{d^i}$ and by $m_\Delta$.  For an overview of relevant algorithms, see the introduction of \cite{pison}.  The following example illustrates the method.

Consider $S=k[x, y], \underline{d}=(0,1,4), \overline{d}=(0,3,4)$.  The corresponding cone of Betti diagrams has several simplices and we choose the simplex $\Delta$ spanned by the maximal chain of degree sequences:
$$(0)>(0,3)>(0,3,4)>(0,2,4)>(0,1,4)$$
The corresponding pure diagrams are:
\begin{equation}\label{eqn:comp:pures}
\begin{pmatrix}1&-&-\\ -&-&- \\ -&-&- \end{pmatrix}, \begin{pmatrix}1&-&-\\ -&-&- \\ -&1&- \end{pmatrix}, \begin{pmatrix}1&-&-\\ -&-&- \\ -&4&3 \end{pmatrix}, \begin{pmatrix}1&-&-\\ -&2&- \\ -&-&1 \end{pmatrix}, \begin{pmatrix}3&4&-\\ -&-&- \\ -&-&1 \end{pmatrix}
\end{equation}
First we must compute $m_\Delta$.  To do this, we consider the square matrix $\Phi$ whose columns correspond to the pure above pure diagrams:
\begin{equation}\label{eqn:phi}
\Phi=\left(\begin{smallmatrix}1&1&1&1&3\\0&0&0&0&4\\0&0& 0&2&0\\ 0&1&4&0&0 \\ 0&0&3&1&1  \end{smallmatrix}\right)
\end{equation}
Since the columns of $\Phi$ are $\mathbb Q$-linearly independent, it follows that the cokernel of $\Phi$ will be entirely torsion.  Note that each minimal generator of $\BN\cap \Delta$ is either a pure diagram or corresponds to a unique nonzero torsion element of $\coker(\Phi)$.  The annihilator of $\coker(\Phi)$ is thus the universal denominator for $\Delta$.  A computation in \cite{macaulay2} shows that $m_\Delta=12$ in this case.

We next compute minimal generators of the $\mathbb N$-solutions of the following linear $\mathbb Z$-system:
$$\mathbb Z^{10} \rTo^{\left(\begin{smallmatrix}-12&0&0&0&0&1&1&1&1&3\\
0&-12&0&0&0&0&0&0&0&4\\
0&0&-12&0&0&0&0& 0&2&0\\
0&0&0&-12&0&0&1&4&0&0 \\
0&0&0&0&-12&0&0&3&1&1  \end{smallmatrix}\right) } \mathbb Z^5
$$
The $\mathbb N$-solutions of the above system correspond to elements of $\BN\cap \Delta$ under the correspondence:
$$(b_1, b_2, b_3, b_4, b_5, a_1, a_2, a_3,a_4,a_5) \mapsto \frac{a_1}{12} \pi_{(0)}+\frac{a_2}{12}\pi_{(0,3)}+\frac{a_3}{12}\pi_{(0,3,4)}+\frac{a_4}{12}\pi_{(0,2,4)}+\frac{a_5}{12}\pi_{(0,1,4)}$$
Computation yields that $\BN\cap \Delta$ has $14$ minimal semigroup generators.\footnote{We use Algorithm 2.7.3 of \cite{sturminv} for this computation.  Also, see \cite{pison} for other relevant algorithms.}  These consist of the $5$ pure diagrams from line (\ref{eqn:comp:pures}) plus the following $9$ diagrams:
\begin{eqnarray*}
\begin{pmatrix}1&1&-\\ -&-&- \\ -&1&1 \end{pmatrix}, \begin{pmatrix}2&2&-\\ -&1&- \\ -&-&1 \end{pmatrix}, \begin{pmatrix}1&-&-\\ -&1&- \\ -&2&2 \end{pmatrix},\begin{pmatrix}1&-&-\\ -&-&- \\ -&2&1 \end{pmatrix}, \begin{pmatrix}2&2&-\\ -&-&- \\ -&1&1  \end{pmatrix},\\
 \begin{pmatrix}3&3&-\\ -&-&- \\ -&1&1  \end{pmatrix},
\begin{pmatrix}1&-&-\\ -&-&- \\ -&3&2  \end{pmatrix}, \begin{pmatrix}2&1&-\\ -&1&- \\ -&1&1  \end{pmatrix},   \begin{pmatrix}1&-&-\\ -&1&- \\ -&1&1\end{pmatrix},
\end{eqnarray*}
It is not difficult to verify that each of these generators is the Betti diagram of some module.  Thus in this case we have $\BN\cap \Delta=\Bmod \cap \Delta$.

%

\begin{remark}\label{rmk:bndgens}
We can easily bound the number of generators of $\BN\cap \Delta$ from above.  Let $\Delta$ be a simplex spanned by $d^0, \dots, d^s$.  Let $\Phi$ be the square matrix:
$$\Phi: \mathbb Z^{s+1} \to \bigoplus_{i=0}^n \bigoplus_{j=\underline{d}_i}^{\overline{d}_i} \mathbb Z$$
which sends the $\ell$'th generator to the pure diagram $\pi_{d^\ell}$.  As in line (\ref{eqn:phi}), the cokernel of $\Phi$ will be entirely torsion (this follows from Proposition 1, \cite{boij-sod2}.)  Each minimal generator of $\BN\cap \Delta$ will correspond to either a pure diagram or a unique nonzero element of $\coker(\Phi)$.  Since the order of $\coker(\Phi)$ equals the determinant of $\Phi$, the number of generators of $\BN\cap \Delta$ is bounded above by $\det (\Phi)+s$.

We know of no effective upper bound for the number of generators of $\Bmod\cap \Delta$.
\end{remark}

\begin{remark}
Although the semigroup $\BN$ is saturated, the map $k[\Bmod]\to k[\BN]$ may not be the normalization map.  For instance, if there is a ray $r$ such that $r \cap \Bmod$ only contains every other lattice point, then the saturation of $r \cap \Bmod$ will not equal $r \cap \BN$.  Eisenbud, Fl\o ystad and Weyman conjecture that there are no rays corresponding to pure diagrams which have this property \cite{efw}.
\end{remark}

\section{Buchsbaum-Rim obstructions to existence of Betti diagrams}\label{sec:brobs}
In Proposition  \ref{prop:BRobstructions} we illustrate obstructions which prevent a virtual Betti diagram from being the Betti diagram of an actual module.  To yield information not contained in the main results of \cite{eis-schrey} and \cite{boij-sod2}, these obstructions must be sensitive to scalar multiplication of diagrams.  For simplicity we restrict to the case that $M$ is generated in degree $0$, though all of these obstructions can be extended to the general case.

We say that a diagram $D$ is a \defi{Betti diagram} if $D$ equals the Betti diagram of some module $M$, and we say that $D$ is a \defi{virtual Betti diagram} if $D$ belongs to the semigroup of virtual Betti diagrams $\BN$.  Many properties of modules (e.g. codimension, Hilbert function) can be computed directly from the Betti diagram.   We extend such properties to virtual diagrams in the obvious way.  Proposition \ref{prop:BRobstructions} only involves quantities which can be determined entirely from the Betti diagram; thus we may easily test whether an arbitrary virtual Betti diagram is ``obstructed'' in the sense of this proposition.

\begin{propo}[Buchsbaum-Rim obstructions]\label{prop:BRobstructions}
Let $M$ a graded module of codimension $e\geq 2$ with minimal presentation:
$$\bigoplus_{\ell=1}^b S(-j_\ell)\rTo^{\phi} S^a\rTo M\rTo 0$$
Assume that $j_1\leq j_2\leq \dots \leq j_b$.  Then we have the following obstructions:
\begin{enumerate}
    \item\label{prop:BR:2nd} (Second syzygy obstruction): $$\underline{d_2}(M)\leq \sum_{\ell=1}^{a+1}j_{\ell}$$
    \item\label{prop:BR:codim}  (Codimension obstruction)  $$b=\sum_{j} \beta_{1,j}(M) \geq e+a-1$$  If we have equality, then $\beta(M)$ must equal the Betti diagram of the Buchsbaum-Rim complex of $\phi$.
    \item\label{prop:BR:reg}  (Regularity obstruction in Cohen-Macualay case): If $M$ is Cohen-Macaulay then we also have that
    $$\reg(M)+e=\overline{d_e}(M)\leq  \sum_{\ell=b-e-a+2}^{b}j_\ell$$
\end{enumerate}
These obstructions are independent of one another, and each obstruction occurs for some virtual Betti diagram.
\end{propo}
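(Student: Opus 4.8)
The plan is to derive (1) and (2) from classical facts about Cramer relations and determinantal ideals, to derive (3) by combining a local-duality identity with a reduction to the generic Buchsbaum--Rim situation, and to prove the last sentence by exhibiting explicit virtual Betti diagrams on which the individual obstructions act. Throughout one uses that $e\geq 2$ forces $M$ to be torsion, so $\phi$ has rank $a$ and in particular $b\geq a+1$ (if $b=a$ then $\phi$ would be injective and $\pdim M\leq 1$, contradicting $e\geq 2$).

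For (1), I would restrict $\phi$ to its first $a+1$ columns $v_1,\dots,v_{a+1}$, which are $a+1$ elements of $S^a$ and hence admit a nontrivial syzygy. If the maximal minors of $[v_1\mid\cdots\mid v_{a+1}]$ do not all vanish, the Cramer relation whose $i$th coefficient is the signed $a\times a$ minor on the columns other than the $i$th is a nonzero homogeneous element of $\ker\phi=\Syz_2(M)$ of degree exactly $\sum_{\ell=1}^{a+1}j_\ell$. If those minors all vanish, the span of $v_1,\dots,v_{a+1}$ has some rank $r<a$, and the analogous Cramer relation on a suitable nonzero $r\times(r+1)$ submatrix produces a nonzero element of $\Syz_2(M)$ whose degree is a sum of $r+1$ of the numbers $j_1,\dots,j_{a+1}$, hence at most $\sum_{\ell=1}^{a+1}j_\ell$ (the case where all $a+1$ columns vanish being handled directly). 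Since a minimal-degree element of $\Syz_2(M)$ is a minimal generator, this gives $\underline{d_2}(M)\leq\sum_{\ell=1}^{a+1}j_\ell$.

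For (2), I would use that $\Supp M=V(I_a(\phi))$, so $\codim I_a(\phi)=e$, together with the classical bound $\codim I_a(\phi)\leq b-a+1$ for the ideal of maximal minors of an $a\times b$ matrix with $b\geq a$; this is precisely $b\geq e+a-1$. In case of equality, $I_a(\phi)$ has the generic codimension $b-a+1$, so by the Buchsbaum--Rim acyclicity theorem $\Buchs(\phi)$ is a free resolution of $M$. Each differential of $\Buchs(\phi)$ is built from the entries and minors of $\phi$, all lying in $\m$ since $\phi$ is a minimal presentation, so $\Buchs(\phi)$ is the minimal free resolution and $\beta(M)=\beta(\Buchs(\phi))$.

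For (3), the Cohen--Macaulay hypothesis first gives the identity $\reg(M)+e=\overline{d_e}(M)$: dualizing the minimal resolution shows $\operatorname{Ext}^e_S(M,S)$ is generated in degrees $\{-d_{e,j}\}$, hence has initial degree $-\overline{d_e}(M)$, and graded local duality converts this into $\reg M=\overline{d_e}(M)-e$. To bound $\overline{d_e}(M)$ I would select, by a general-position argument, $e+a-1$ of the columns of $\phi$ whose $a\times a$ minors still generate an ideal of codimension $e$; writing $\phi'$ for this submatrix and $M'=\coker\phi'$, the Buchsbaum--Rim complex $\Buchs(\phi')$ resolves $M'$, so $M'$ is Cohen--Macaulay of codimension $e$, and since the last term of $\Buchs(\phi')$ is concentrated in the single degree equal to the sum of the chosen column-degrees, $\overline{d_e}(M')=\sum(\text{chosen column-degrees})\leq\sum_{\ell=b-e-a+2}^{b}j_\ell$. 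Finally $M'$ surjects onto $M$ (deleting columns enlarges the cokernel) and both are Cohen--Macaulay of codimension $e$, so their top local cohomology modules surject, forcing $\reg M\leq\reg M'$; combined with the identity this gives (3). For the last sentence I would exhibit, for each of the three obstructions, a virtual Betti diagram violating that obstruction while obeying the other numerical constraints — such diagrams are readily found among pure diagrams and small $\mathbb N$-combinations of them, e.g.\ $\pi_{(0,1,3,4)}$ of (\ref{eqn:oldfavorite}) fails (1) — and verify the displayed inequalities by direct computation on the diagram. The main obstacle is the general-position step in (3): an arbitrary choice of $e+a-1$ columns of $\phi$ need not have $\codim I_a=e$, so one must show a good choice exists, e.g.\ via basic element theory after reducing to an $\m$-primary situation by a general linear section.
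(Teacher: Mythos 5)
Your proposal follows essentially the same route as the paper for all three obstructions; the differences are cosmetic rather than structural. For (1), your Cramer-relation argument is exactly the paper's Lemma \ref{lem:brcol1} on Buchsbaum--Rim second syzygies, including the reduction to a lower-rank submatrix when the $a\times a$ minors of the first $a+1$ columns all vanish; one small step you elide is the verification that the Cramer relation of the $r\times(r+1)$ submatrix is a syzygy on \emph{all} $a$ rows, not just the $r$ chosen ones (in the paper this is the invertible-matrix argument in Lemma \ref{lem:brcol1}; it also follows from the rank count since the remaining rows lie in the span of the chosen ones over the fraction field). For (2), you use $\codim I_a(\phi)\le b-a+1$ while the paper computes $\pdim\Buchs(\phi)=b-a+1$; these are the same bound, and both reach the equality case via Buchsbaum--Rim acyclicity and minimality. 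For (3), you correctly identify the general-position step as the only real gap; the paper handles it by first doing an Artinian reduction so that $I_a(\phi)$ is $\m$-primary, then performing a \emph{generic degree-respecting change of basis} on the columns of $\phi$ and taking $\widetilde\phi$ to be the last $e+a-1$ (highest-degree) columns $\sigma_b,\dots,\sigma_{b-e-a+2}$ --- this is the concrete device that replaces your appeal to basic-element theory, and it is why the bound comes out in terms of $j_{b-e-a+2},\dots,j_b$ rather than an arbitrary subset. Your top-local-cohomology surjection plays the role of the paper's appeal to Corollary 20.19 of \cite{eis} for $\reg M'\ge\reg M$; both are fine. On the final ``independence'' sentence you are vaguer than the paper, which writes down one explicit diagram per obstruction; note also that the example you cite, $\pi_{(0,1,3,4)}$, in fact violates \emph{both} (1) and (2) (with $a=1$, $j_1=j_2=1$, $\underline{d_2}=3>2$ and $b=2<e+a-1=3$), so it does not separate the two --- the paper uses $2\pi_{(0,1,5,6,7,8)}+\pi_{(0,5,6,7,8,9)}$ to isolate (1).
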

In addition, note that both the weak and strong versions of the Buchsbaum-Eisenbud-Horrocks rank conjecture about minimal Betti numbers (see \cite{buchs-eis-gor}or \cite{charalamb} for a description) would lead to  similar obstructions.  Since each Buchsbaum-Eisenbud-Horrocks conjecture imposes a condition on each column of the Betti diagram, the corresponding obstruction would greatly strengthen part \eqref{prop:BR:codim} of the above proposition.


\begin{remark}\label{rmk:duality}
For $D$ a diagram, let $D^\vee$ be the diagram obtained by rotating $D$ by $180$ degrees.  When $D$ is the Betti diagram of a Cohen-Macaulay module $M$ of codimension $e$, then $D^\vee$ is the Betti diagram of some twist of $M^\vee:= Ext^{e}_S(M,S)$, which is also a Cohen-Macaulay module of  codimension $e$.  Thus, in the Cohen-Macaulay case, we may apply these obstructions to $D$ or to $D^\vee$.
\end{remark}

Given any map $\widetilde{\phi}$ between free modules $F$ and $G$, we can construct the Buchsbaum-Rim complex on this map, which we denote as $\Buchs (\widetilde{\phi})$.  The Betti table of the complex $\Buchs(\widetilde{\phi})$ will depend only on the Betti numbers of $F$ and $G$, and it can be thought of as an approximation of the Betti diagram of the cokernel of $\widetilde{\phi}$.

As in the statement of Proposition \ref{prop:BRobstructions}, let $M$ be a graded $S$-module of codimension $\geq 2$ with minimal presentation:
$$F_1:=\bigoplus_{\ell=1}^b S(-j_\ell)\rTo^{\phi} S^a\rTo M\rTo 0$$
We will consider free submodules $\widetilde{F_1}\subseteq F_1$, the induced map $\widetilde{\phi}: \widetilde{F_1}\to S^a$, and the Buchsbaum-Rim complex on $\widetilde{\phi}$.  By varying $\widetilde{\phi}$ we will produce the obstructions listed in Proposition \ref{prop:BRobstructions}.

To prove the first obstruction, we introduce some additional notation.  Let the first syzygies of $M$ be $\sigma_1, \dots, \sigma_b$ with degrees $\deg(\sigma_\ell)=j_\ell$. The first stage of the Buchsbaum-Rim complex on $\phi$ is the complex:
$$\bigwedge ^{a+1} F_1 \rTo^{\epsilon} F_1 \to S^a$$
A basis of $\bigwedge^{a+1} F_1$ is given by $e_{I'}$ where $I'$ is a subset $I'\subseteq \{1, \dots, b\}$ with $|I'|=a+1$.  Let $\det(\phi_{I'\setminus \{i\}})$ be the maximal minor corresponding to the columns $I'\setminus \{i\}$. Then the map $\epsilon$ sends $e_{I'}\mapsto \sum_{i\in I'} e_i\det(\phi_{I'\setminus \{i\}})$.  We refer to $\epsilon(e_{I'})$ as a \defi{Buchsbaum-Rim second syzygy}, and we denote it by $\rho_{I'}$.  There are $\binom{b}{a+1}$ Buchsbaum-Rim second syzygies.  It may happen that one of these syzygies specializes to $0$ in the case of $\phi$.  But as we now prove, if $\rho_{I'}$ specializes to $0$ then we can find another related syzygy in lower degree.

\begin{lemma}\label{lem:brcol1}
Let $I'=\{i_1, \dots, i_{a+1} \}\subseteq \{1, \dots, b\}$, and assume that $\rho_{I'}$ is a trivial second syzygy.  Then $M$ has a second syzygy of degree strictly less than $\sum_{i\in I'} j_{i}$ and supported on a subset of the columns corresponding to $I'$.
\end{lemma}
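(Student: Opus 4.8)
The plan is to analyze what it means for the Buchsbaum-Rim second syzygy $\rho_{I'}=\sum_{i\in I'}e_i\det(\phi_{I'\setminus\{i\}})$ to vanish, and to extract from that vanishing a genuine second syzygy of strictly smaller degree. The key observation is that $\rho_{I'}$ vanishes precisely when every maximal minor $\det(\phi_{I'\setminus\{i\}})$, $i\in I'$, is zero; that is, when the $a\times(a+1)$ submatrix $\phi_{I'}$ of $\phi$ on the columns indexed by $I'$ has rank $\leq a-1$. So the first step is: since $\operatorname{rank}(\phi_{I'})\leq a-1$, the kernel of the map $\widetilde\phi_{I'}\colon\bigoplus_{i\in I'}S(-j_i)\to S^a$ contains the kernel of the $a$-minors, and in fact a standard argument (genericity of the situation, or localizing at the generic point) shows that $\phi_{I'}$ has a nonzero syzygy. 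More carefully, over the fraction field $K=\operatorname{Frac}(S)$ the matrix $\phi_{I'}\otimes K$ has rank $\leq a-1$, so its kernel has $K$-dimension $\geq 2$; clearing denominators, the graded kernel of $\phi_{I'}$ is a free module of rank $\geq 2$.

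Next I would produce the degree bound. Among the relations on the columns $I'$, consider the $(a+1)\times(a+1)$ matrix obtained by adjoining to $\phi_{I'}$ a single extra row; its determinant is, up to sign, exactly $\rho_{I'}$ expanded along that row, hence zero. I would instead argue directly: let $\tau$ be a minimal nonzero graded syzygy supported on the columns $I'$, so $\tau=\sum_{i\in I'}c_i e_i$ with $c_i\in S$ homogeneous, $\sum c_i\,\phi(e_i)=0$, and $\deg c_i+j_i=\deg\tau$ for the $i$ with $c_i\neq0$. The content of the argument is to show $\deg\tau<\sum_{i\in I'}j_i$. For this, note that if the submatrix $\phi_{I'}$ had a syzygy only in degree $\geq\sum_{i\in I'}j_i$, then by Cramer's rule the entries of such a minimal syzygy would be (up to common factor) the signed maximal minors $\det(\phi_{I'\setminus\{i\}})$ — but these are all zero, a contradiction, unless the syzygy degree drops. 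Concretely: the Buchsbaum-Rim syzygy $\rho_{I'}$ is the ``expected'' syzygy, it lives in degree $\sum_{i\in I'}j_i$, and its vanishing forces the true minimal syzygy of $\phi_{I'}$ into strictly lower degree, since the Buchsbaum-Rim complex computes the resolution of $\operatorname{coker}\phi_{I'}$ exactly when the latter has the generic (maximal) codimension $2$, and a syzygy of degree $\geq\sum j_i$ that is not a multiple of $\rho_{I'}$ cannot exist when $\operatorname{rank}\phi_{I'}=a$, while $\operatorname{rank}\phi_{I'}\leq a-1$ here. Pulling this relation back to $F_1$ via $\widetilde F_1=\bigoplus_{i\in I'}S(-j_i)\hookrightarrow F_1$ gives a second syzygy of $M$ of the desired degree and support.

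I expect the main obstacle to be the rank/degree bookkeeping: making rigorous the claim that ``the vanishing of $\rho_{I'}$ forces a lower-degree syzygy'' rather than merely a different one of the same degree. The cleanest route is probably to localize at the generic point to see that $\ker(\phi_{I'})\otimes K$ has dimension $\geq2$, pick two $K$-independent syzygies, and then use a Fitting-ideal / Cramer argument together with the degree constraints $j_1\le\cdots\le j_b$ to show that a minimal homogeneous generator of $\ker\phi_{I'}$ must sit strictly below $\sum_{i\in I'}j_i$ — because the unique (up to scalar) syzygy living in degree exactly $\sum_{i\in I'}j_i$, namely $\rho_{I'}$ read as a column vector of minors, is by hypothesis identically zero. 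One should also double-check that the resulting relation on $F_1$ is genuinely a second syzygy of $M$ (i.e. maps into $\ker\phi$, which is automatic) and is nonzero, which follows since the $c_i$ are not all zero.
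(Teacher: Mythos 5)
You start correctly: $\rho_{I'}$ vanishes iff all $a\times a$ minors of the column-submatrix $\phi_{I'}$ vanish, i.e. $\rank(\phi_{I'})\le a-1$, and so $\ker(\phi_{I'})\otimes K$ has dimension $\ge 2$ over $K=\operatorname{Frac}(S)$. But from that point the proposal has a genuine gap. The assertion that a minimal homogeneous syzygy on the columns $I'$ must have degree strictly less than $\sum_{i\in I'}j_i$ is never actually proved. The ``Cramer's rule'' step --- that a minimal syzygy ``would be (up to common factor) the signed maximal minors'' --- is unfounded: Cramer's rule produces one particular syzygy (namely $\rho_{I'}$) and says nothing about what other syzygies must look like once the rank has dropped; when $\rank(\phi_{I'})<a$ the $K$-syzygy space is $\ge 2$-dimensional and there is no uniqueness. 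Likewise, knowing $\ker(\phi_{I'})$ has rank $\ge 2$ gives no a priori bound on the degree of a minimal generator. You explicitly flag this as ``the main obstacle,'' and indeed the argument stops there.

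The missing idea --- and the crux of the paper's proof --- is to \emph{shrink the column set}, not merely to observe that extra syzygies exist. Set $a'=\rank(\phi_{I'})<a$. Choose rows $J''$ of size $a'$ and the first $a'$ columns of $I'$ so that the resulting $a'\times a'$ minor is nonzero, and let $I''$ be the first $a'+1$ columns of $I'$; then $I''\subsetneq I'$. The Buchsbaum--Rim syzygy $\tau$ of the $a'\times(a'+1)$ matrix $A_{I'',J''}$ is nonzero (one of its maximal minors is the nonsingular block), and it annihilates $A_{I'',J''}$ by construction. Since $\rank(A_{I'',W})=a'$ and the rows in $J''$ are $K$-independent, the remaining rows are $K$-linear combinations of them, so a row reduction over $K$ shows $A_{I'',W}\cdot\tau=0$; thus $\tau$, viewed in $\bigoplus_{i\in I''}S(-j_i)\subseteq F_1$, is a genuine second syzygy of $M$. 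Its degree is $\sum_{i\in I''}j_i$, which is strictly less than $\sum_{i\in I'}j_i$ precisely because $I''$ has fewer elements. This column-shrinking step is what produces the degree drop; without it, the rank computation and the $K$-dimension count do not yield the conclusion.
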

\begin{proof}
Let $A$ be an $a\times b$-matrix representing $\phi$.  Let $C=\{1, \dots, b\}$ index the columns of $A$, and let $W=\{1, \dots, a\}$ index the rows of $A$.  If $I\subseteq C$ and $J\subseteq W$ then we write $A_{I,J}$ for the corresponding submatrix.

The Buchsbaum-Rim syzygy $\rho_{I'}$ is trivial if and only if all the $a\times a$ minors of $A_{I', W}$ are zero.  Let $a'=rank(A_{I',W})$ which by assumption is strictly less than $a$.  We may assume that the upper left $a'\times a'$ minor of $A_{I',W}$ is nonzero.  We set $I''=\{i_1, \dots, i_{a'+1}\}$ and $J''=\{1, \dots, a'\}$.  Let $\tau$ be the Buchsbaum-Rim syzygy of $A_{I'',J''}$.  Then $\tau\ne 0$ because $\det(A_{I''\setminus \{a'+1\}, J''})\ne 0$.  Also $(A_{I'',J''}) \cdot \tau =0$.  Thus:
\begin{equation*}
\begin{pmatrix} A_{I'',W} \end{pmatrix} \cdot \tau =\begin{pmatrix} A_{I'',J}\\  A_{I'', W-J''} \end{pmatrix}
\cdot \tau =\begin{pmatrix} 0 \\ * \end{pmatrix}
\end{equation*}
There exists an invertible matrix $B\in GL_{a}(k(x_1, \dots, x_n))$ such that:
$$B\cdot A_{I'',W}=\begin{pmatrix}A_{I'',J''} \\ 0 \end{pmatrix}$$
This gives:
$$0=(B\cdot A_{I'',W})\cdot \tau=B\cdot (A_{I'',W}\cdot \tau)$$
Since $B$ is invertible over $k(x_1, \dots, x_n)$ we conclude that $A_{I'',W}\cdot \tau=0$.  Thus $\tau$ is a syzygy on the columns of $A$ indexed by $I''$, and therefore $\tau$ represents a second syzygy of $M$.  The degree of $\tau$ is $\sum_{i\in I''} j_i$ which is strictly less than $\sum_{i\in I'} j_i$.
\end{proof}
We may now prove the second syzygy obstruction and the codimension obstruction.
\begin{proof}[Proof of the second syzygy obstruction in Proposition \ref{prop:BRobstructions}]
Apply Lemma \ref{lem:brcol1}, choosing $I'=\{1, \dots, a+1\}$.
\end{proof}

\begin{proof}[Proof of codimension obstruction in Proposition \ref{prop:BRobstructions}]
Recall that the module $M$ has minimal presentation:
$$\bigoplus_{\ell=1}^b S(-j_\ell)\rTo^{\phi} S^a\rTo M\rTo 0$$
Let $\Buchs(\phi)$ be the Buchsbaum-Rim complex of $\phi$.  Then we have
$$\codim(M)\leq \pdim(M) \leq \pdim(\Buchs(\phi))= b-a+1=\sum_{j} \beta_{1,j}(M) - a +1$$
Since $M$ has codimension $e$, we obtain the desired inequality.  In the case of equality, the maximal minors of $\phi$  contain a regular sequence of length $e$, so we may conclude:
$$\beta(M)=\beta (\Buchs(\phi))$$
\end{proof}

\begin{proof}[Proof of regularity obstruction in Proposition \ref{prop:BRobstructions}]
Since $M$ is Cohen-Macaulay of codimension $e$, we may assume by Artinian reduction that $M$ is finite length.  Recall that $b=\sum_j \beta_{1,j}(M)$ and let $\phi$ as in the proof of the codimension obstruction.  If $b=e+a-1$ then we have that
$$\reg(M)=\reg(\Buchs(\phi))=\sum_{\ell=1}^{b} j_\ell$$
We are left with the case that $b>e+a-1$.  Recall that $\sigma_1, \dots, \sigma_b$ is a basis of the syzygies of $M$.  We may change bases on the first syzygies by sending $\sigma_i \mapsto \sum p_{i\ell} \sigma_\ell$ where $\deg(p_{i\ell})=\deg \sigma_i-\deg \sigma_\ell=j_i-j_\ell$, and where the matrix $(p_{i\ell})$ is invertible over the polynomial ring.  We choose a generic $(p_{i\ell})$ which satisfies these conditions.  Let $\widetilde{\phi}$ be the map defined by $\sigma_b, \sigma_{b-1}, \dots, \sigma_{b-e-a+2}$.  Define $M':=\coker(\widetilde{\phi})$.  By construction, $M'$ has finite length, $\beta(M')=\beta(\Buchs(\widetilde{\phi}))$, and $M'$ surjects onto $M$.  Thus we have
$$\sum_{\ell=b-e-a+2}^f j_\ell=\reg(M')\geq \reg(M)=\overline{d_n}(M)$$
where the inequality follows from Corollary 20.19 of \cite{eis}.
\end{proof}

\begin{proof}[Proof of independence of obstructions in Proposition \ref{prop:BRobstructions}]
To show that the obstructions of Proposition \ref{prop:BRobstructions} are independent, we construct an explicit example of a virtual Betti diagram with precisely one of the obstructions.

For Proposition \ref{prop:BRobstructions} (\ref{prop:BR:2nd}) consider:
$$2\cdot \pi_{(0,1,5,6,7,8)}+\pi_{(0,5,6,7,8,9)}=\begin{pmatrix}
3&4&-&-&-&-\\ -&-&-&-&-&-\\ -&-&-&-&-&-\\
-&70&252&336&200&45
\end{pmatrix}$$
Then $\underline{d_2}=5>4$ so this diagram has a Buchsbaum-Rim second syzygy obstruction.

For Proposition \ref{prop:BRobstructions} (\ref{prop:BR:codim}) consider:
$$\pi_{(0,1,3,4)}=\begin{pmatrix} 1&2&-&- \\ -&-&2&1\end{pmatrix}$$
In this case $\sum \beta_{1,j}(\pi_{(0,1,3,4)})=2<3+1-1=3$.  More generally, the pure diagram $\pi_{(0,1,\alpha,\alpha+1)}$ has a codimension obstruction for any $\alpha\geq 3$.

For the case of equality in Proposition \ref{prop:BRobstructions} (\ref{prop:BR:codim}), consider:
$$\pi_{(0,1,6,10)}=\begin{pmatrix}
6&8&-&-\\
-&-&-&-\\
-&-&-&-\\
-&-&3&-\\
-&-&-&-\\
-&-&-&-\\
-&-&-&-\\
-&-&-&1\\
\end{pmatrix}$$
Since we have $\sum \beta_{1,j}(\pi_{(0,1,6,10)})=8=3+6-1$, the diagram $\pi_{(0,1,6,10)}$ should equal the Betti table of the Buchsbaum-Rim complex on a map: $\phi: R(-1)^8\to R^6$.  This is not the case.

For Proposition \ref{prop:BRobstructions} (\ref{prop:BR:reg}) consider:
$$2\cdot \pi_{(0,1,4,9,10)}=\begin{pmatrix}
6&10&-&-&-\\
-&-&-&-&-\\
-&-&6&-&-\\
-&-&-&-&-\\
-&-&-&-&-\\
-&-&-&-&-\\
-&-&-&6&4
\end{pmatrix}$$
Here we have $\overline{d_4}=10>9=\sum_{j=1}^9 1$.

\end{proof}
\section{A Linear Strand obstruction in Projective Dimension 3}\label{sec:linstrand}
In this section we build obstructions based on one of Buchsbaum and Eisenbud's structure theorems about free resolutions in the special case of codimension $3$ (see \cite{buchs-eis-struc}.)  The motivation of this section is to explain why the following virtual Betti diagrams do not belong to $\Bmod$:
\begin{equation}\label{eqn:minor:diags}
D=\begin{pmatrix} 2&4&3&- \\ - & 3&4&2 \end{pmatrix}, D'=\begin{pmatrix} 3&6&4&- \\ - & 4&6&3 \end{pmatrix}, D''=\begin{pmatrix} 2&3&2&- \\ - & 5&7&3 \end{pmatrix}
\end{equation}
Note that these diagrams do not have any of the Buchsbaum-Rim obstructions.  In fact, there are virtual Betti diagrams similar to each of these which are Betti diagrams of modules.  For instance, all of the following variants of $D$ {\em are} Betti diagrams of modules:
$$\begin{pmatrix} 2&4&1&- \\ - & 1&4&2 \end{pmatrix},  \begin{pmatrix} 2&4&2&- \\ - & 2&4&2 \end{pmatrix},\begin{pmatrix} 2&4&3&1 \\ - & 3&5&2 \end{pmatrix},  \begin{pmatrix}4&8&6&- \\ - & 6&8&4 \end{pmatrix}$$
The problem with $D$ must therefore relate to the fact that it has too many linear second syzygies to {\em not} contain a Koszul summand.  Yet whatever obstruction exists for $D$ must disappear upon scaling from $D$ to $2\cdot D$.  Incidentally, the theory of matrix pencils could be used to show that $D$ and $D''$ are not Betti diagrams.  We do not approach this problem via matrix pencils because we seek an obstruction which does not depend on the fact that $\beta_{0,0}=2$.

Let $S=k[x,y,z]$ and let $M$ be a graded $S$-module $M$ of finite length.  Further, let $M$ be generated in degree $0$ and with regularity $1$, so that $$\beta(M)=\begin{pmatrix}a&b&c&d\\- & b'&c'&d'  \end{pmatrix}$$
Let $T_i$ be the maps along the top row of the resolution of $M$ so that we have a complex:
$$0\rTo S(-3)^d \rTo{(T_3)} S(-2)^c\rTo^{(T_2)} S(-1)^b \rTo^{(T_1)} S^a \rTo 0$$
Similarly, let $U_j$ stand for matrices which give the maps along the bottom row of the resolution of $M$.  Observe that each $T_i$ and $U_j$ consists entirely of linear forms, and that $U_1=0$.   If $d\ne 0$, then the minimal resolution of $M$ contains a copy of the Koszul complex as a free summand. Since we may split off this summand, we assume that $d=0$.

We then have the following obstruction:

\begin{prop}[Maximal minor, codimension $3$ obstruction]\label{prop:maxminobs}
Let $M$ as defined above, and continue with the same notation.  Then:
$$b'-a+\rank(T_1)+\rank(U_3)\leq c'$$
Equivalently $c-d'+\rank(T_1)+\rank(U_3)\leq b$.
\end{prop}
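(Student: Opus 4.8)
The plan is to read off the block structure that the $\mathbb Z$-grading forces on the minimal resolution of $M$, and then to argue one homological step at a time after inverting the variables. Split each $F_i$ as $F_i=S(-i)^{\beta_{i,i}}\oplus S(-i-1)^{\beta_{i,i+1}}$ (regularity $1$ and generation in degree $0$ leave only these two strands), so that with respect to these splittings
$$\partial_i=\begin{pmatrix} T_i & B_i\\ 0 & U_i\end{pmatrix},$$
where $T_i,U_i$ have linear entries and $B_i$ has quadratic entries. The lower-left block vanishes because it would be given by scalars and the resolution is minimal; $\partial^2=0$ then forces both $T_\bullet$ and $U_\bullet$ to be complexes, in particular $U_2U_3=0$; and $U_1=0$ since $\beta_{0,1}(M)=0$. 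None of this uses the value of $a=\beta_{0,0}$, which is the point of the section.

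First I would prove the single inequality $\rank U_2\ge b'-a+\rank T_1$, working over the fraction field $P:=\Frac(S)$. Since $M$ has finite length, $M\otimes_S P=0$, so $\mathbf F\otimes_S P$ is an exact complex of $P$-vector spaces; in particular $\ker(\partial_1\otimes P)=\im(\partial_2\otimes P)$ inside $P^b\oplus P^{b'}$. I would then project this subspace onto the second summand $P^{b'}$ and compute its image in two ways. Projecting $\im\partial_2$ gives exactly $\im U_2$, because the second block-row of $\partial_2$ is $\begin{pmatrix}0 & U_2\end{pmatrix}$. Projecting $\ker\partial_1$ gives $\{w: B_1w\in\im T_1\}=B_1^{-1}(\im T_1)$, whose dimension is at least $b'-\dim\bigl(P^a/\im T_1\bigr)=b'-a+\rank T_1$. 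Equating the two descriptions yields the stated bound on $\rank U_2$.

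Then I would finish using $U_2U_3=0$: over $P$ this gives $\im U_3\subseteq\ker U_2$, hence $\rank U_3\le c'-\rank U_2\le c'-(b'-a+\rank T_1)$, which rearranges to $b'-a+\rank T_1+\rank U_3\le c'$. For the equivalent formulation $c-d'+\rank T_1+\rank U_3\le b$ I would either substitute the finite-length Euler identity $a-b-b'+c+c'-d'=0$, or observe that $Ext^3_S(M,S)$ is again a finite-length module generated in a single degree with regularity $1$, whose minimal resolution is the $S$-dual of $\mathbf F$ and whose Betti diagram is $\beta(M)$ rotated by $180^\circ$ (cf.\ Remark \ref{rmk:duality}); under this duality the roles of $T_1$ and $U_3$ are interchanged, so the second inequality is just the first applied to the dual.

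I do not expect a genuine obstacle in this argument: the delicate points are only bookkeeping — tracking degrees in the block decomposition (which is exactly where minimality is used), identifying the diagonal blocks with the strand differentials $T_i,U_j$ named in the statement, and noting that localization at $(0)$ preserves exactness of the resolution. The real work of the section lies elsewhere: turning this inequality into a non-existence statement for the diagrams in \eqref{eqn:minor:diags} requires the Buchsbaum--Eisenbud structure theory, through multiplier ideals, to bound $\rank T_1$ and $\rank U_3$ from below, and that is the step in which the codimension-$3$ hypothesis is genuinely exploited.
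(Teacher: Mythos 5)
Your argument is correct, but it is a genuinely different route from the one the paper takes. The paper invokes the Buchsbaum--Eisenbud structure theorem of \cite{buchs-eis-struc}: the maximal minors of the middle matrix $\partial_2$ factor as products of corresponding minors of $\partial_1$ and $\partial_3$. Choosing nonzero minors $\Delta_1$ of $(T_1\ Q_1)$ (involving $\tau=\rank T_1$ columns from $T_1$) and $\Delta_3$ of $\binom{Q_3}{U_3}$ (involving $\mu=\rank U_3$ rows from $U_3$), the complementary $\big(b+b'-a\big)\times\big(c+c'-d'\big)$ submatrix of $\partial_2$ has nonzero determinant $\Delta_1\Delta_3$ and contains a forced $(b'-a+\tau)\times(c-d'+\mu)$ block of zeros in its lower-left corner; for the determinant to survive, this zero block cannot be too large, which gives the inequality directly. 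Your proof instead localizes at the generic point and runs a pure rank computation: exactness of $\mathbf F\otimes P$ at $F_1$ gives $\im U_2=B_1^{-1}(\im T_1)$, hence $\rank U_2\ge b'-a+\rank T_1$, and then $U_2U_3=0$ gives $\rank U_3\le c'-\rank U_2$. Both arguments are correct and yield exactly the same bound; yours is more elementary and self-contained (only flatness of $\Frac(S)$ and linear algebra), while the paper's is shorter given the cited structure theorem and sits more naturally next to the later use of multiplier ideals. One small misattribution in your closing remark: the lower bounds on $\rank T_1$ and $\rank U_3$ needed to obstruct the diagrams in \eqref{eqn:minor:diags} come not from multiplier ideals but from the Eisenbud--Harris classification of spaces of matrices of low rank \cite{eis-har-vecs} (via Lemmas \ref{lem:indecomp} and \ref{lem:kossumAND}); the multiplier-ideal/structure-theorem input belongs to the proof of the present proposition, which you have replaced.
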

\begin{proof}
By assumption, $M$ has a minimal free resolution given by:
$$0 \rTo S(-4)^{d'} \rTo^{\left( \begin{smallmatrix}Q_3 \\ U_3  \end{smallmatrix} \right)} S(-2)^c\oplus S(-3)^{c'} \rTo^{\left( \begin{smallmatrix} T_2 & Q_2 \\ 0 & U_2  \end{smallmatrix} \right)} S(-1)^b\oplus S(-2)^{b'} \rTo^{\left( \begin{smallmatrix} T_1 &Q_1  \end{smallmatrix} \right)} S^a \rTo M$$
Each $Q_i$ stands for a matrix of degree $2$ polynomials.  By \cite{buchs-eis-struc} we know that each maximal minor of the middle matrix is the product of a corresponding maximal minor from the first matrix and a corresponding maximal minor from the third matrix.

Let $\tau=\rank(T_1)$ and $\mu=\rank(U_3)$.  Since $\codim(M)\ne 0$, the rank of the matrix $\begin{pmatrix} T_1 & Q_1 \end{pmatrix}$ equals $a$.  By thinking of this matrix over the quotient field $k(x,y,z)$, we may choose a basis of the column space which contains $\tau$ columns from $T_1$ and $a-\tau$ columns from $Q_1$.  Let $\Delta_1$ be the determinant of the resulting $a\times a$ submatrix, and observe that $\Delta_1$ is nonzero.  Similarly, we may construct  a $d'\times d'$ minor $\Delta_3$ from the last matrix such that $\Delta_3$ is nonzero and involves $\mu$ rows from $U_3$ and $d'-\mu$ rows from $Q_3$.

Now consider the middle matrix:
	\begin{equation*}
		\bordermatrix{
		& c & c' \cr
		b & T_2&Q_2 \cr
		b'& 0 & U_2
		}
	\end{equation*}
Note that the columns of this matrix are indexed by the rows of the third matrix, and the rows of this matrix are indexed by the columns of the first matrix.  Choose the unique maximal submatrix such that the columns repeat none of the choices from $\Delta_3$ and such that the rows repeat none of the choices from $\Delta_1$.  We obtain a matrix of the following shape:
	\begin{equation*}
		\bordermatrix{
		& c-d'+\mu & c'-\mu \cr
		b-\tau & *&* \cr
		b'-a+\tau & 0 & *
		}
	\end{equation*}
Since $M$ has finite length, the Herzog-K\"uhl conditions in \cite{herz-kuhl} imply that $c'+c-d'=b+b'-a$, and thus this is a square matrix.  If $\Delta_2$ is the determinant of the matrix constructed above, then $\Delta_2=\Delta_1\Delta_3$ by \cite{buchs-eis-struc}.  Since $\Delta_1\ne0$ and $\Delta_3\ne 0$, this implies that the $(b'-a+\tau \times c-d'+\mu)$ block of zeroes in the lower left corner cannot be too large.  In particular,
$$b'-a+\tau + c-d'+\mu \leq b'+b-a$$
By applying the Herzog-K\"uhl equality $c'+c-d'=b+b'-a$, we obtain the desired results.
\end{proof}

We now prove a couple of lemmas which will allow us to use this obstruction to rule out the virtual Betti diagrams from line (\ref{eqn:minor:diags}).  We continue with the same notation, but without the assumption that $d=0$.

\begin{defn}
A matrix $T$ is \defi{decomposable} if there exists a change of coordinates on the source and target of $T$ such that $T$ becomes block diagonal or such that $T$ contains a column or row of all zeroes.  If $T$ is not decomposable then we say that $T$ is \defi{indecomposable}.
\end{defn}

\begin{lemma}\label{lem:indecomp}
If the Betti diagram $\begin{pmatrix}a&b&c&d\\- & b'&c'&d'  \end{pmatrix}$ is Cohen-Macaulay and is a minimal generator of $\Bmod$, then $T_1$ is indecomposable or $b=0$.
\end{lemma}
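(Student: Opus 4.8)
The plan is to argue by contrapositive: suppose the Cohen-Macaulay Betti diagram $\begin{pmatrix}a&b&c&d\\- & b'&c'&d'\end{pmatrix}$ has $b\ne 0$ and $T_1$ decomposable, and deduce that the corresponding module $M$ cannot give a \emph{minimal} generator of $\Bmod$, i.e.\ that $\beta(M)$ decomposes as a sum of two nonzero elements of $\Bmod$. Recall $T_1$ is the linear part of $F_1\to F_0$, a $b\times a$ matrix of linear forms. First I would dispose of the easy sub-case: if $T_1$ contains a row or column of zeroes, then either a generator of $M$ admits no linear syzygy or a linear syzygy is trivial; in the first situation $F_0$ splits off a summand $S(-\text{(generator degree)})$ on which the whole Koszul complex appears, and in the second situation a syzygy generator is superfluous, contradicting minimality of the resolution or of the diagram. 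The real content is the block-diagonal case.

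So assume, after a change of coordinates on $F_0$ and on the linear part of $F_1$, that $T_1=\begin{pmatrix}T_1'&0\\0&T_1''\end{pmatrix}$ with both blocks nonzero, splitting $F_0=F_0'\oplus F_0''$ and (the degree-$1$ part of) $F_1=F_1'\oplus F_1''$. The key step is to promote this splitting of the first map to a splitting of the entire resolution. Here I would use the structure theorem of \cite{buchs-eis-struc} for codimension-$3$ resolutions, exactly as in the proof of Proposition \ref{prop:maxminobs}: the maximal minors of the middle matrix factor through maximal minors of the outer matrices, and this multiplicative structure is rigid enough that once the presentation matrix block-decomposes, one can choose the later differentials (after coordinate changes, using genericity of the quadratic entries $Q_i$) to be block diagonal as well. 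Concretely, I would show that the off-diagonal blocks of the middle matrix $\begin{pmatrix}T_2&Q_2\\0&U_2\end{pmatrix}$ — which carry quadratic entries — can be cleared against the diagonal blocks: since the image of $F_1'$ under $\phi$ already spans a direct summand of $F_0$ complementary to the image of $F_1''$, any second syzygy must respect the decomposition, so $F_2$ (and then $F_3$) splits compatibly. The upshot is $M\cong M'\oplus M''$ with $M'=\coker(\phi|_{F_1'})$ nonzero (because $F_0'\ne 0$) and $M''$ nonzero likewise, whence $\beta(M)=\beta(M')+\beta(M'')$ is a nontrivial decomposition in $\Bmod$, contradicting minimality.

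I expect the main obstacle to be precisely the promotion step: a block decomposition of the \emph{linear} part of the presentation does not, a priori, force a splitting of the module, because the quadratic entries $Q_1$ in $\begin{pmatrix}T_1&Q_1\end{pmatrix}$ and the later quadratic blocks could "mix" the two pieces. The honest way to handle this is to observe that because the diagram is Cohen-Macaulay of finite projective dimension $3$ and we have already split off any Koszul summand (so $d=0$ may be assumed as in the setup before Proposition \ref{prop:maxminobs}), the columns of $T_1$ — linear syzygies on the generators — are the \emph{only} syzygies in degree $1$, and a block structure on them combined with the Buchsbaum–Eisenbud factorization of minors pins down the column space of the next map. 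One then runs an induction down the resolution, at each stage using that a generic choice of the quadratic (and higher) entries compatible with the fixed minor-factorization can be taken block diagonal. If this rigidity argument proves too delicate to state cleanly, the fallback is to invoke it only in the generic-pencil normal form used elsewhere in Section \ref{sec:linstrand}, which suffices for the application to the diagrams in line (\ref{eqn:minor:diags}).
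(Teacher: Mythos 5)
The paper's proof and yours take fundamentally different routes, and yours has a real gap that you partly flag yourself.

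The paper does not try to split the module $M$ at all. It projects the Cohen--Macaulay part of $\Bmod$ onto linear strands, $\begin{pmatrix}a&b&c&d\\-&b'&c'&d'\end{pmatrix}\mapsto(a,b,c,d)$, and observes that the Herzog--K\"uhl equations make this projection a semigroup isomorphism onto the semigroup of linear strands (the three Herzog--K\"uhl relations recover $b',c',d'$ linearly from $a,b,c,d$). The question then becomes whether $(a,b,c,d)$ can be a minimal generator of the linear-strand semigroup when $T_1$ is decomposable, and there the decomposability of $T_1$ bites directly, since $T_1$ \emph{is} the first map of the linear strand. The entire point of the lemma is to use $T_1$ decomposable to decompose the \emph{Betti diagram} in the semigroup, not to decompose the module $M$.

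Your proposal instead tries to prove $M\cong M'\oplus M''$, which is a strictly stronger statement, and I don't believe the promotion step works. The Buchsbaum--Eisenbud theorem that maximal minors of the middle matrix factor through minors of the outer matrices constrains determinants; it does not let you clear the off-diagonal quadratic blocks $Q_1,Q_2$ of the differentials, and there is no genericity available --- $M$ is a specific module. Concretely, a presentation $\left(\begin{smallmatrix}x&0&q_1&q_2\\0&y&q_3&q_4\end{smallmatrix}\right)$ has block-diagonal linear part but the quadratics can tie the two pieces together so that $\operatorname{coker}$ is indecomposable. Your own ``I expect the main obstacle to be precisely the promotion step'' is the right instinct; the fallback of invoking normal forms ``for the application'' does not rescue a proof of the lemma as stated. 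The same issue appears already in your ``easy sub-case'': if a generator has no linear syzygy, $F_0$ does \emph{not} thereby split off a Koszul summand, because that generator can still participate in quadratic syzygies mixing it with the others.

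The missing idea is exactly the Herzog--K\"uhl reduction: once you know the linear strand determines the whole (Cohen--Macaulay, codimension $3$) diagram, you never need $M$ to split --- you only need the numerical linear strand $(a,b,c,d)$ to decompose as a sum of two nonzero linear strands realized by modules, and a decomposition of $T_1$ is what hands you that at the level of the linear strand rather than the level of $M$. Replacing ``split the module'' with ``split the Betti diagram via the linear-strand isomorphism'' is both easier and what the statement actually requires.
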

\begin{proof}
If we project the semigroup $\Bmod$ onto its linear strand via:
$$\begin{pmatrix}a&b&c&d\\- & b'&c'&d'  \end{pmatrix}\mapsto \begin{pmatrix}a&b&c&d \end{pmatrix}$$
then the image equals the semigroup of linear strands in $\Bmod$. By the Herzog-K\"uhl equations, the linear strand $\begin{pmatrix} a&b&c&d \end{pmatrix}$ of such a Cohen-Macaulay module determines the entire Betti diagram.  Hence the projection induces an isomorphism between the subsemigroup of Cohen-Macaulay modules of codimension 3 in $\Bmod$ and the semigroup of linear strands in $\Bmod$.  The modules with $T_1$ decomposable and $b\ne 0$ cannot be minimal generators of the semigroup of linear strands in $\Bmod$.
\end{proof}

\begin{lemma}\label{lem:kossumAND}
With notation as above we have:
\begin{enumerate}
     \item \label{lem:kossum} If there exists a free submodule $F\subseteq S(-1)^b$ such that  $F\cong S(-1)^3$ and such that the restricted map $T_1|_{F}$ has rank $1$, then the minimal resolution of $M$ contains a copy of the Koszul complex as a direct summand.
     \item \label{cor:rank2}  If $a=2, b\geq 3,$ and $T_1$ is indecomposable then $T_1$ has rank $2$.
\end{enumerate}
\end{lemma}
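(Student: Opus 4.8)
The plan is to reduce both parts to one linear-algebra input: over the UFD $S=k[x,y,z]$, a matrix of rank at most $1$ factors as an outer product $u\cdot w^{T}$ with $u$ a column vector, $w$ a row vector, and $u$ primitive (divide out the gcd of the entries of some nonzero column). When the columns are moreover homogeneous of degree $1$ — as they are for $T_1$ and for $T_1|_F$ — a degree count shows $u$ is homogeneous of some degree $e\in\{0,1\}$ and the entries of $w$ are homogeneous of degree $1-e$. I would record this normal form first and then feed it into the two parts; here the only genuinely delicate point will be ruling out the case $e=1$ in part \eqref{lem:kossum}, which I explain below.

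For part \eqref{cor:rank2} I would argue by contrapositive. Since $a=2$ we always have $\rank(T_1)\le 2$, so it suffices to show that $\rank(T_1)\le 1$ makes $T_1$ decomposable. The case $T_1=0$ is immediate, so assume $\rank(T_1)=1$ and write $T_1=u w^{T}$ in the above normal form. If $e=0$, a $k$-linear change of basis on the target $S^{2}$ turns $u$ into $e_1$, so $T_1$ acquires a zero row. If $e=1$, then $w$ has constant, not all zero, entries, and a $k$-linear change of basis on the source $S(-1)^{b}$ turns $w$ into $(1,0,\dots,0)$, so $T_1$ acquires at least $b-1\ge 2$ zero columns. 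Either way $T_1$ is decomposable, so indecomposability forces $\rank(T_1)=2$.

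For part \eqref{lem:kossum} I would pick a basis $f_1,f_2,f_3$ of $F$ and set $\sigma_j:=T_1(f_j)$, a degree-$1$ vector in $S^{a}$; since $F\isom S(-1)^3$ is free of rank $3$, the $\sigma_j$ are $S$-linearly independent. The hypothesis gives $[\sigma_1\ \sigma_2\ \sigma_3]=u w^{T}$ in the normal form. The case $e=1$ is the point needing care: there $w$ has scalar entries, so the $\sigma_j$ are scalar multiples of the single vector $u$, hence $S$-linearly dependent — a contradiction that uses crucially that $F$ is honestly free of rank $3$, not merely generated by three elements. Hence $e=0$, and after a minimality-preserving $k$-linear change of basis on $S^{a}$ I may take $u=e_1$, so $\sigma_j=w_j e_1$ with $w_j$ linear forms. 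Each $\sigma_j$ is a syzygy of $M$, so $w_j g_1=0$ in $M$ for the first minimal generator $g_1$; and $S$-linear independence of the $\sigma_j$ forces $w_1,w_2,w_3$ to be $k$-linearly independent, hence to span $\m$. Therefore $\m g_1=0$, so the minimal generator $g_1$ is a socle element, whence $Sg_1\isom k$ splits off $M$ (split the inclusion $Sg_1\hookrightarrow M$ using the quotient $M\to M/\m M$ together with a $k$-functional dual to $g_1$). The minimal free resolution of $k$ is the Koszul complex on $x,y,z$, which then appears as a direct summand of the minimal free resolution of $M$, as desired. The remaining verifications — the outer-product normal form and the socle-splitting fact — are routine, so I expect the $e=1$ exclusion to be the only real obstacle.
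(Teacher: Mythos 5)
Your approach — the outer‑product factorization $T_1|_F = u w^{T}$ over the UFD $S$, followed by a degree count on $u$ — is genuinely different from the paper's, which instead invokes the Eisenbud–Harris classification of low‑rank matrix spaces (\cite{eis-har-vecs}) to put $T_1|_F$ directly into compression‑space normal form. Your route is more elementary in that it bypasses the classification theorem, and in part~\eqref{cor:rank2} it gives a clean, self‑contained case analysis. However, there is a genuine gap at precisely the step you flagged as the delicate one.

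The claim ``since $F\cong S(-1)^3$ is free of rank $3$, the $\sigma_j$ are $S$-linearly independent'' is false. Freeness of $F$ makes the basis vectors $f_1,f_2,f_3$ $S$-linearly independent, but says nothing about their \emph{images} $\sigma_j=T_1(f_j)$; the map $T_1|_F$ need not be injective. Worse, the hypothesis that $T_1|_F$ has rank $1$ over $\operatorname{Frac}(S)$ means the columns satisfy nontrivial rational, hence (clearing denominators) polynomial, linear relations — the $\sigma_j$ are in fact necessarily $S$-linearly \emph{dependent}. So the contradiction you use to kill $e=1$ doesn't exist as stated. The correct, and sufficient, input is $k$-linear independence: minimality of the resolution makes the columns of $T_1$ $k$-linearly independent, and since $F\hookrightarrow S(-1)^{b}$ is an inclusion of free modules generated in degree $-1$ it is injective on the degree-$(-1)$ piece, so the $\sigma_j$ are $k$-linearly independent $k$-linear combinations of the columns of $T_1$, hence $k$-linearly independent. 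This still rules out $e=1$ (there the three $\sigma_j=w_j u$ with $w_j\in k$ span at most a line) and, in the $e=0$ case, still forces $w_1,w_2,w_3$ to be $k$-linearly independent linear forms spanning $\mathfrak m_1$. With ``$S$-linear'' replaced by ``$k$-linear'' and the minimality justification inserted, your argument goes through; as written it rests on a false statement.
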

\begin{proof}
(\ref{lem:kossum})  Given the setup of the lemma, we have that $T_1|_{F}$ is an $a\times 3$ matrix of rank $1$ with linearly independent columns over $k$.  All matrices of linear forms of rank $1$ are compression spaces by \cite{eis-har-vecs}.  Since the columns of $T_1|_{F}$ are linearly independent, this means that we may choose bases such that:

\begin{equation}\label{eqn:koszulmat}
T_1|_{F}=\begin{pmatrix}x&y&z\\ 0&0&0\\ 0&0&0\\ \vdots & \vdots & \vdots \\  0&0&0 \end{pmatrix}
\end{equation}
The result follows immediately.

(\ref{cor:rank2})  Assume that $T_1$ has rank $1$ and apply part \eqref{lem:kossum} with $F$ any free submodule isomorphic to $S(-1)^3$.  We may then assume that the first three columns of $T_1$ look like (\ref{eqn:koszulmat}), and whether $b=3$ or $b>3$ it quickly follows that $T_1$ is decomposable.
\end{proof}

%


\begin{prop}\label{prop:dd'd''}
The virtual Betti diagrams
$$ D=\begin{pmatrix} 2&4&3&- \\ - & 3&4&2 \end{pmatrix}, D'=\begin{pmatrix} 3&6&4&- \\ - & 4&6&3 \end{pmatrix}, D''=\begin{pmatrix} 2&3&2&- \\ - & 5&7&3 \end{pmatrix}$$
do not belong to $\Bmod$.
\end{prop}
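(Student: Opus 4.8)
The plan is to assume, for contradiction, that one of $D,D',D''$ equals $\beta(M)$ for some module $M$ in the relevant semigroup, and then to violate the maximal‑minor obstruction of Proposition \ref{prop:maxminobs}, using the duality of Remark \ref{rmk:duality} together with Lemmas \ref{lem:indecomp} and \ref{lem:kossumAND}.

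First I would pin down the shape of $M$. In each case the numerator of the Hilbert series of $M$ is divisible by $(1-x)^3$ (one gets $2(1+x)$, $3(1+x)$ and $2+3x$ respectively), so $M$ has finite length, hence is Cohen--Macaulay of codimension $3$, generated in degree $0$, with $\reg M=1$; and since $\beta_{3,3}(M)=0$ its minimal free resolution has no Koszul direct summand. Thus $M$ is exactly a module of the kind studied throughout Section \ref{sec:linstrand}, and I may write its resolution in the block form used to prove Proposition \ref{prop:maxminobs}, with linear maps $T_1\colon S(-1)^b\to S^a$ and $U_3\colon S(-4)^{d'}\to S(-3)^{c'}$. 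Proposition \ref{prop:maxminobs} gives $b'-a+\rank T_1+\rank U_3\le c'$, while trivially $\rank T_1\le a$ and $\rank U_3\le d'$. Since $b'+d'>c'$ for each diagram (namely $3+2>4$, $4+3>6$, $5+3>7$), it suffices to prove $\rank T_1=a$ and $\rank U_3=d'$. Moreover, by Remark \ref{rmk:duality} the diagram $\beta(M)^\vee$ is the Betti diagram of a twist of $\operatorname{Ext}^3_S(M,S)$, another module of the same kind, and dualizing the resolution identifies $U_3$ of $M$ with the transpose of $T_1$ of that twisted dual; so it is enough to establish the single statement ``$\rank T_1=a$'' for every module whose Betti diagram lies in $\{D,D',D'',D^\vee,D'^\vee,D''^\vee\}$.

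For $D$ this is elementary: here $a=2$ and $b=4$, and if $\rank T_1\le1$ then after a change of basis on $S^a$ the matrix $T_1$ has a zero row, forcing one generator of $M$ to be annihilated by $b=4$ linearly independent linear forms, which is impossible in $k[x,y,z]$. As $D$ is self‑dual, this finishes the case of $D$. For $D''$ one again has $a=2$ but only $b=3$, so the pigeonhole step is not by itself a contradiction; instead, if $\rank T_1\le1$ one obtains (by the same change of basis) a degree‑$0$ generator of $M$ killed by $\m$, hence a copy of $k$ splitting off $M$, whence $\beta_{3,3}(M)\neq0$, a contradiction. Alternatively, after reducing to the case that $\beta(M)$ is a minimal generator of $\Bmod$ (these diagrams are minimal generators of $\BN$, a finite check using the fan structure of $\BQ$ on the relevant interval, hence minimal generators of $\Bmod$ when they lie in it), one invokes Lemma \ref{lem:indecomp} to get that $T_1$ is indecomposable and Lemma \ref{lem:kossumAND}(\ref{cor:rank2}) to conclude $\rank T_1=2$.

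The remaining, and hardest, cases are $D'$ and the dual of $D''$, where $a=3$ so that Lemma \ref{lem:kossumAND}(\ref{cor:rank2}) does not apply. If $\rank T_1=1$ then, since $b\ge3$ and the columns of $T_1$ are $k$‑independent by minimality, every free submodule $F\cong S(-1)^3\subseteq S(-1)^b$ has $\rank(T_1|_F)=1$, so Lemma \ref{lem:kossumAND}(\ref{lem:kossum}) produces a Koszul direct summand, contradicting $\beta_{3,3}=0$. The real obstacle is excluding $\rank T_1=2$: here $\coker T_1$ has positive rank, and I would need to show that no indecomposable rank‑$2$ matrix of linear forms of the relevant size in three variables can occur as $T_1$ — either via the Eisenbud--Harris classification of matrices of linear forms of bounded rank (already used in the proof of Lemma \ref{lem:kossumAND}), showing any such $T_1$ is a compression space and hence decomposable, in which case Lemma \ref{lem:indecomp} and minimality of the generator give the contradiction; or by arguing that a genuine rank‑$2$ block would force a direct‑sum decomposition of $\beta(M)$ in $\Bmod$, again contradicting minimal generation. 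I expect this rank‑$2$, $a=3$ analysis (needed both for $D'$ directly and for $D''$ through its dual) to be where essentially all the work lies; once it is in place, the three diagrams are ruled out simultaneously by the inequality $b'+d'>c'$ recorded above.
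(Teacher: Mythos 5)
Your approach is the same as the paper's: reduce to a Cohen--Macaulay, finite-length module with linear strand maps $T_1,U_3$, invoke the maximal-minor obstruction of Proposition~\ref{prop:maxminobs}, use Remark~\ref{rmk:duality} to translate the $U_3$ claim into a $T_1$ claim for the dual module, and then pin down the ranks using indecomposability (Lemma~\ref{lem:indecomp}) and Lemma~\ref{lem:kossumAND}. The bookkeeping is right: in each case $b'+d'>c'$, so it suffices to show $\rank T_1=a$ and $\rank U_3=d'$, and the two $a=2$ computations you carry out are correct. Your direct pigeonhole for $D$ (a rank~$\le 1$ $2\times 4$ matrix of linear forms with $k$-independent columns has, after row operations, a zero row, forcing four independent linear forms in three variables) is a legitimate shortcut around Lemma~\ref{lem:kossumAND}\eqref{cor:rank2}, and the ``splits off a copy of $k$, so $\beta_{3,3}\ne0$'' argument for $D''$ is a nice alternative.

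However, there is a genuine gap, and you flag it yourself: you do not actually prove $\rank T_1=a$ in the $a=3$ cases, namely for $D'$ (self-dual, $T_1$ a $3\times 6$ matrix) and for $(D'')^\vee$ (needed to get $\rank U_3=3$ for $D''$, $T_1$ a $3\times 7$ matrix). You correctly identify that one should use the Eisenbud--Harris classification \cite{eis-har-vecs} to show a rank-$2$ $T_1$ is a compression space, but you then say ``and hence decomposable,'' which is not true of all the compression forms. The paper's proof at this point lists the three normal forms for a $3\times 6$ rank-$2$ compression space and disposes of them by \emph{two different} mechanisms: two of the forms (a zero row, or four zero columns) are decomposable and are excluded by Lemma~\ref{lem:indecomp}; the third form has five columns supported on a single row, which fails only because five $k$-independent linear forms cannot exist in $k[x,y,z]$ -- this case is \emph{not} decomposable in the sense of the paper's definition and cannot be eliminated by minimality alone. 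So the remaining work is not merely routine: you need the explicit case analysis of compression forms with the mixed decomposability/independent-columns argument, for both the $3\times 6$ and $3\times 7$ matrices, and this is precisely the content the paper supplies and your write-up leaves as a sketch.
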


\begin{proof}
 Assuming $D$ were a Betti diagram, Lemma \ref{lem:indecomp} implies that the corresponding matrices $T_1$ and $U_3$ are indecomposable.  Lemma \ref{lem:kossumAND}  (\ref{cor:rank2}) implies that for $D$ as in (\ref{eqn:koszulmat}), we have $\rank T_1=\rank U_3=2$.  Observe that $D$ now has a maximal minor obstruction, as $c-d'+\tau+\mu=5$ while $b=4$.

Next we consider $D'$.  If $D'$ were a Betti diagram, then the corresponding $T_1$ and $U_3$ would both have to be indecomposable.  If also $T_1$ had rank $2$, then Theorem 1.1 of \cite{eis-har-vecs} would imply that it is a compression space.  In particular, $T_1$ would have one of the following forms:
$$
  \begin{pmatrix}
0&0&0&0&*&*\\
0&0&0&0&*&*\\
0&0&0&0&*&*
  \end{pmatrix},
  \begin{pmatrix}
0&0&0&0&0&*\\
0&0&0&0&0&*\\
*&*&*&*&*&*
  \end{pmatrix},
  \text{ or }
  \begin{pmatrix}
0&0&0&0&0&0\\
*&*&*&*&*&*\\
*&*&*&*&*&*
  \end{pmatrix}
$$
The matrix forms on the left and right fail to be indecomposable.  The middle form could not have linearly independent columns, since each $*$ stands for a linear form, and we are working over $k[x,y,z]$.
Thus $T_1$ and $U_3$ both have rank $3$, and it follows that $D'$ has a maximal minor obstruction.

In the case of $D''$, similar arguments show that the ranks of $T_1$ and $U_3$ must equal $2$ and $3$ respectively.  Thus $D''$ also has a maximal minor obstruction.
\end{proof}

\begin{example}\label{ex:nonconsec}
Note that the diagram $2\cdot D$ belongs to $\Bmod$.  In fact, if $N=k[x,y,z]/(x,y,z)^2$ and $N^\vee=Ext^3(N,S)$ then:
$$\beta(N\oplus N^\vee(4))=\begin{pmatrix} 1&-&-&- \\ - & 6&8&3 \end{pmatrix}+\begin{pmatrix} 3&8&6&- \\ - &-&-&1 \end{pmatrix}=\begin{pmatrix} 4&8&6&- \\ - & 6&8&4 \end{pmatrix}=2\cdot D$$
This diagram does not have a maximal minor obstruction as $\rank(T_1)=\rank(U_3)=3$.

Conversely, up to isomorphism the direct sum $N\oplus N^\vee(4)$ is the only module $M$ whose Betti diagram equals $2\cdot D$.  The key observation is that for $M$ to avoid having a maximal minor obstruction, we must have that $\rank (T_1)+\rank (U_3)\leq 6$. Thus we may assume that $M$ is determined by a $4\times 8$ matrix of linear forms which has rank $\leq 3$.  Such matrices are completely classified by \cite{eis-har-vecs} and an argument as in Proposition \ref{prop:dd'd''} can rule out all possibilities except that $M\cong N\oplus N^\vee(4)$.

In the proof of Theorem \ref{thm:negative} (\ref{thm:negative:nonconsec}), we will show that $3\cdot D$ does not belong to $\Bmod$.
\end{example}

\section{Special Cases when $\BN=\Bmod$}\label{sec:specialcases}

In this section we prove Proposition \ref{prop:specialcases} in two parts.  We first deal with projective dimension $1$.

\begin{prop}\label{prop:codim1}
Let $S=k[x]$ and fix $\underline{d}\leq \overline{d}$.  Then $\BN=\Bmod$.  The semigroup $\Bmod$ is minimally generated by pure diagrams.
\end{prop}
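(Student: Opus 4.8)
The plan is to reduce to the structure theory for finitely generated modules over the principal ideal domain $S=k[x]$, which is graded here. Over $S=k[x]$, every finitely generated graded module decomposes as a direct sum of a free part $\bigoplus_a S(-a)^{r_a}$ and a torsion part $\bigoplus S/(x^{e})(-b)^{s_{b,e}}$; since we are only allowing projective dimension $\leq 1$, the torsion summands contribute only the minimal resolutions $0\to S(-b-e)\to S(-b)\to S/(x^e)(-b)\to 0$. Hence the Betti diagram of any module in $\mathcal Z$ is a nonnegative integer combination of the ``elementary'' diagrams $\beta(S(-a))$ (a single $1$ in position $(0,a)$) and $\beta(S/(x^e)(-b))$ (a $1$ in position $(0,b)$ and a $1$ in position $(1,b+e)$). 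First I would record this decomposition and observe that, modulo the degree-sequence truncation imposed by $\underline d,\overline d$, these elementary diagrams are exactly the pure diagrams available: $\pi_{(a)}=\beta(S(-a))$ and $\pi_{(b,b+e)}=\beta(S/(x^e)(-b))$, up to the scalar normalization fixing $\pi_d$ as the first lattice point on its ray (which here is already a genuine Betti diagram, so $c_i=1$ for every defining ray).

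Next I would show $\BN\subseteq \Bmod$. A virtual Betti diagram $D\in\BN$ lies in some simplex $\Delta$ of $\BQ$, so it is a nonnegative rational combination of the pure diagrams $\pi_{d^0}<\dots<\pi_{d^s}$ spanning $\Delta$; but since $D$ has integer entries and the only nonzero positions of diagrams in $\Delta$ are the $s+1$ ``staircase'' slots, and because in projective dimension $1$ the pure diagrams $\pi_{d}$ for $d=(a)$ or $d=(b,b+e)$ are supported on essentially disjoint/triangular sets of positions, one can read off the coefficients of $D$ on each $\pi_{d^i}$ directly from the entries of $D$ and conclude these coefficients are themselves nonnegative integers. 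Concretely: the entries $\beta_{1,j}(D)$ determine the integer multiplicities of the torsion-type pure diagrams (each such $\pi_{(b,b+e)}$ is the unique generator contributing to position $(1,b+e)$ with a prescribed companion in row $0$), and subtracting these off leaves a diagram concentrated in row $0$, whose entries are the integer multiplicities of the $\pi_{(a)}$. Thus $D=\sum n_i\pi_{d^i}$ with $n_i\in\mathbb N$, and assembling the corresponding direct sum of copies of $S(-a)$'s and $S/(x^e)(-b)$'s produces a module in $\mathcal Z$ with Betti diagram $D$. Combined with the trivial inclusion $\Bmod\subseteq\BN$, this gives $\BN=\Bmod$.

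Finally, for minimal generation by pure diagrams: the preceding paragraph shows the pure diagrams $\pi_{d^i}$ generate $\Bmod\cap\Delta$ for each $\Delta$, hence generate $\Bmod$; and each $\pi_d$ is indecomposable in the semigroup because it is supported on at most two positions in a ``staircase'' pattern that cannot be obtained as a sum of two nonzero diagrams from $\Bmod$ (a torsion-type $\pi_{(b,b+e)}$ has a single unit in row $1$ and a single unit in row $0$ linked to it; a free-type $\pi_{(a)}$ is a single unit in row $0$). So no $\pi_d$ lies in the subsemigroup generated by the others, and the pure diagrams form the unique minimal generating set. The main obstacle I anticipate is purely bookkeeping: making precise the claim that, within a fixed simplex $\Delta$, the entries of an integral $D$ force integral coefficients on the spanning pure diagrams — this is where one must use that $\codim=1$ forces the resolutions to be short enough that the pure diagrams occupy the positions of $\Delta$ in a triangular (hence unimodular) fashion, so that $\coker(\Phi)$ in the sense of Remark \ref{rmk:bndgens} is actually trivial.
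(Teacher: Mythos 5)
Your route differs from the paper's, and the step you flag as the main obstacle does contain a real error. The paper avoids the simplicial fan entirely: it reduces to the Cohen--Macaulay codimension $1$ case, sorts the generator degrees $\alpha_1\le\cdots\le\alpha_s$ and the syzygy degrees $\gamma_1\le\cdots\le\gamma_s$, observes that $D\in\BN$ iff $\alpha_i+1\le\gamma_i$ for all $i$, and sets $M=\bigoplus_i\coker\bigl(\phi_i\colon S(-\gamma_i)\to S(-\alpha_i)\bigr)$, so that $\beta(M)=\sum_i\pi_{(\alpha_i,\gamma_i)}=D$.

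You instead work simplex by simplex and claim the coefficients of $D$ on the spanning pure diagrams can be read off one position at a time because ``each such $\pi_{(b,b+e)}$ is the unique generator contributing to position $(1,b+e)$.'' That is false: with $\underline{d}=(0,1)$ and $\overline{d}=(1,2)$, the maximal chain $(0,1)<(0,2)<(1,2)<(1)$ contains both $\pi_{(0,2)}$ and $\pi_{(1,2)}$, and both contribute to $\beta_{1,2}$, so the coefficients are not determined slot by slot. The fact you actually need --- that $\det\Phi=\pm 1$ for the square matrix $\Phi$ of Remark~\ref{rmk:bndgens}, i.e.\ that $m_\Delta=1$ --- is true, but for a different reason, namely total unimodularity: in projective dimension $1$ every pure diagram has all entries in $\{0,1\}$, with $\pi_{(a,c)}$ hitting exactly one slot in row $0$ and one in row $1$ and $\pi_{(a)}$ hitting a single slot, so $\Phi$ is a square submatrix of the incidence matrix of a bipartite graph; such matrices are totally unimodular, and since the columns of $\Phi$ are linearly independent the determinant is $\pm 1$. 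With that repair your argument closes, and in fact both proofs are secretly leaning on the same bipartite-matching combinatorics --- the paper's input is hidden in its unproved characterization of $\BN$ by the inequalities $\alpha_i+1\le\gamma_i$. The opening appeal to the structure theorem for modules over the PID $k[x]$ is correct but unnecessary: the paper simply writes down the cyclic modules it needs rather than proving that every module decomposes this way.
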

\begin{proof}
Let $D\in \BN$ be a virtual Betti diagram of projective dimension $1$.  We may assume that $D$ is a Cohen-Macaulay diagram of codimension $1$.  Then the Herzog-K\"uhl conditions \cite{herz-kuhl} imply that $D$ has the same number of generators and first syzygies.  List the degrees of the generators of $D$ in increasing order $\alpha_1\leq \alpha_2 \leq \dots \leq \alpha_s$, and list the degrees of the syzygies of $D$ in increasing order $\gamma_1\leq \gamma_2 \leq \dots \leq \gamma_s$.  Then $D\in \BN$ if and only if we have:
$$\alpha_{i}+1\leq \gamma_{i}$$
for $i=1, \dots, s$.  Choose $M$ to be a direct sum of the modules
$$M_i:=\coker (\phi_i: R(-\gamma_{i})\to R(-\alpha_i) )$$
where $\phi_i$ is represented by any element of degree $\gamma_{i}-\alpha_{i}$ in $R$.  Note that $\beta(M_i)$ equals the pure diagram $\pi_{(\alpha_i,\gamma_i)}$.  Thus $D\in \Bmod$ and $D=\beta(M)=\sum_i \pi_{(\alpha_i,\gamma_i)}$.
\end{proof}

Recall the definition of a level module \cite{boij}:
\begin{defn}
A graded module $M$ is a \defi{level module} if its generators are concentrated in a single degree and its socle is concentrated in a single degree.
\end{defn}

We now show that in the case of projective dimension $2$ level modules, the semigroups $\BN$ and $\Bmod$ are equal.
\begin{prop}\label{prop:codim2level}
Let $S=k[x,y]$ and fix $\underline{d}\leq \overline{d}$ such that $\underline{d_0}=\overline{d_0}$ and $\underline{d_2}=\overline{d_2}$.  Then $\BN=\Bmod$.
\end{prop}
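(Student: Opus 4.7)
The plan is to translate the question from Betti diagrams into Hilbert functions and then apply the main construction of \cite{sod-codim2}. Set $a := \underline{d_0} = \overline{d_0}$ and $c := \underline{d_2} = \overline{d_2}$. Under these constraints every $D \in \BN$ has generators only in degree $a$ and top syzygies only in degree $c$, so $D$ is either a codimension $2$ Cohen-Macaulay level diagram (when $\beta_{2,c}(D) > 0$) or a diagram of projective dimension $\leq 1$; the latter case is handled by an argument parallel to Proposition \ref{prop:codim1}, so I focus on the level Cohen-Macaulay case.

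First, to each such $D \in \BN$ I would attach the associated Hilbert function
$$H_D(t) := \sum_{i,j}(-1)^i \beta_{i,j}(D)\,H_{S(-j)}(t),$$
which is the Hilbert function that any realization $M$ with $\beta(M) = D$ must have. Second, I would invoke the main theorem of \cite{sod-codim2}, which realizes every level Hilbert function in embedding dimension $2$ by a level module constructed as a quotient of a monomial ideal; applied to $H_D$ this produces a level module $M$ with $H_M = H_D$. Third, I would use that over $S = k[x,y]$ a codim $2$ Cohen-Macaulay module with generators in the single degree $a$ and socle in the single degree $c-2$ has its entire Betti diagram determined by its Hilbert function via the Herzog-K\"uhl equations \cite{herz-kuhl}: the values $\beta_{0,a}$ and $\beta_{2,c}$ are fixed by the leading and trailing behavior of $H_D$, and each $\beta_{1,j}$ with $a<j<c$ is recovered as an explicit linear combination of values of $H_D$. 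Consequently $\beta(M) = D$, so $D \in \Bmod$, which gives the inclusion $\BN \subseteq \Bmod$; the reverse inclusion is immediate.

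The main obstacle is the verification that $H_D$ actually satisfies the level Hilbert function hypothesis required by \cite{sod-codim2}. To this end I would write $D$ as a nonnegative rational combination of the pure diagrams that span the simplex of $\BQ$ containing $D$. Under our constraints $\underline{d_0}=\overline{d_0}$ and $\underline{d_2}=\overline{d_2}$, each such pure diagram is itself a Cohen-Macaulay level diagram and, by \cite{efw, eis-schrey}, is realized by a genuine level module, hence its Hilbert function is a genuine level Hilbert function in embedding dimension $2$. Since S\"oderberg's realizability criterion is a system of termwise inequalities on the Hilbert function, and is therefore stable under nonnegative rational combinations, $H_D$ is itself a genuine level Hilbert function and the input to \cite{sod-codim2} is legitimate.
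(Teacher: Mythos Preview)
Your argument is correct and follows the same overall strategy as the paper: reduce to S\"oderberg's realizability criterion for level Hilbert functions in embedding dimension $2$, then use that a level module's Betti diagram is determined by its Hilbert function. The only substantive difference is in how you verify that $H_D$ satisfies S\"oderberg's criterion. You decompose $D$ as a nonnegative rational combination of pure diagrams, realize each pure diagram by a level module via \cite{efw, eis-schrey}, and then use that the criterion (a system of linear inequalities $h_{i-1}-2h_i+h_{i+1}\le 0$) is closed under nonnegative combinations. The paper instead picks any integer $c$ with $cD\in\Bmod$, observes that $c\,\vec h(D)=\vec h(cD)$ satisfies the criterion because $cD$ is an actual Betti diagram, and then divides the inequalities through by $c$. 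The paper's route is a bit more economical---it avoids invoking the existence of pure resolutions and the simplicial decomposition---while yours makes the role of the Boij--S\"oderberg cone structure more explicit. Your side remark handling the projective-dimension $\le 1$ summands is a welcome bit of care that the paper leaves implicit.
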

\begin{proof}
We may assume that $\underline{d_0}=0$, and then we are considering the semigroup of level modules of projective dimension $2$ with socle degree $(\underline{d_2}-2)$.  Let $D\in \BN$ and let $c$ be a positive integer such that $cD\in \Bmod$.  Let $\vec{h}(D)=(h_0, h_1, \dots )$ be the Hilbert function of $D$.  The main result of \cite{sod-codim2} shows that $\vec{h}(D)$ is the Hilbert function of some level module of embedding dimension $2$ if and only if $h_{i-1}-2h_{i}+h_{i}\leq 0$ for all $i\leq \underline{d_2}-2$.

Since $cD\in \Bmod$, we know that $\vec{h}(cD)=c\vec{h}(D)$ is the Hilbert function of a level module.  Thus we have:
$$ch_{i-1}-2ch_{i}+ch_{i}\leq 0$$
The same holds when we divide by $c$, and thus $\vec{h}(D)$ is the Hilbert function of some level module $M$.  Since $M$ is also a level module, its Betti diagram must equal $D$.
\end{proof}

\begin{rmk}\label{rmk:unobs}
We conjectured above that $\BN=\Bmod$ in general in projective dimension $2$.  Some evidence for this conjecture is provided by computations of the author \cite{erman-thesis} which prove that all virtual Betti diagrams of projective dimension $2$ and generated in a single degree are ``unobstructed'' in the sense of Proposition \ref{prop:BRobstructions}.
\end{rmk}

\section{The Structure of $\BN \setminus \Bmod$}\label{sec:structure}
We are now prepared to prove Theorem \ref{thm:negative} and thus show that for projective dimension greater than $2$, the semigroups $\BN$ and $\Bmod$ diverge.  Recall the statement of Theorem 1.6:

\

\noindent {\bf Theorem 1.6:}  {\em Each of the following occurs in the semigroup of Betti diagrams:}
\begin{enumerate}
    \item   $\Bmod$ {\em is not necessarily a saturated semigroup.}
    \item   {\em The set $|\BN\setminus \Bmod|$ is not necessarily finite.}
    \item   {\em There exist rays of $\BN$ which are missing at least $(\dim S-2)$ consecutive lattice points.}
    \item   {\em There exist rays of $\BN$ where the points of $\Bmod$ are nonconsecutive lattice points.}
\end{enumerate}

\

The various pieces of the theorem follow from a collection of obstructed virtual Betti diagrams.
\begin{proof}[Proof of Part (1) of Theorem \ref{thm:negative}]
We will show that on the ray corresponding to
$$D_1=\favorite$$
every lattice point except $D_1$ itself belongs to $\Bmod$.  We have seen in (\ref{eqn:oldfavorite}) that $D_1\notin \Bmod$.  Certainly $2\cdot D_1\in \Bmod$ as $2\cdot D$ is the Buchsbaum-Rim complex on a generic $2\times 4$ matrix of linear forms.  We claim that $3\cdot D_1$ also belongs to $\Bmod$.  In fact, if we set $S=k[x,y,z]$ and:
$$M:=coker\begin{pmatrix}  x&y&z&0&0&0\\ 0&0&x&y&z&0\\ x+y & 0&0&x&y&z \end{pmatrix}$$
then the Betti diagram of $M$ is $3\cdot D_1$.\end{proof}

\begin{proof}[Proof of Part (2) of Theorem \ref{thm:negative}]
We will show that for all $\alpha\in \mathbb N$, the virtual Betti diagram:
 $$E_\alpha:= \begin{pmatrix}2+\alpha&3&2&- \\ -&5+6\alpha& 7+8\alpha&3+3\alpha\end{pmatrix}$$ does not belong to $\Bmod$.

Note that $E_0\notin \Bmod$ by Proposition \ref{prop:dd'd''}.  Imagine now that $\beta(M)=E_\alpha$ for some $\alpha$.  Let $T_1$ be the linear part of the presentation matrix of $M$ so that $T_1$ is an $(\alpha+2)\times 3$ matrix of linear forms.  Let $T_2$ be the $(3\times 2)$ matrix of linear second syzygies and write:
$$T_1\cdot T_2=\begin{pmatrix} l_{1,1} & l_{1,2} & l_{1,3} \\  l_{2,1} & l_{2,2} & l_{2,3} \\ \vdots & \vdots & \vdots
 \end{pmatrix}  \cdot \begin{pmatrix}s_{1,1}& s_{1,2}\\ s_{2,1}& s_{2,2}\\ s_{3,1}& s_{3,2}\end{pmatrix}$$
By Lemma \ref{lem:kossumAND} (\ref{lem:kossum}), the rank of $T_1$ must be at least $2$.  Let $T_1'$ be the top two rows of $T_1$, and by shuffling the rows of $T_1$, we may assume that the rank of $T_1'$ equals $2$.  So then  may assume that $l_{1,1}$ and $l_{2,2}$ are nonzero.  Since each column of $T_2$ has at least $2$ nonzero entries, it follows that the syzygies represented by $T_2$ remain nontrivial syzygies on the columns of $T_1'$.

It is possible however that columns of $T_1'$ are not $k$-linearly independent.  But since the rank of $T_1'$ equals $2$, we know that at least two of the columns are linearly independent.  Let $C$ be the cokernel of $T_1'$, and let $M':=C_{\leq 1}$ be the truncation of $C$ in degrees greater than $1$.  Then we would have:
$$\beta(M')=\begin{pmatrix}2&3&2&- \\ - & 5&7&3 \end{pmatrix} \text{ or } = \begin{pmatrix}2&2&2&-\\ -&*&*&*  \end{pmatrix}$$
The first case is impossible by Example \ref{prop:dd'd''}, and the second case does not even belong to $\BN$.
\end{proof}

\begin{proof}[Proof of Part (3) of Theorem \ref{thm:negative}]
Fix some prime $P\geq 2$ and let $S=k[x_1, \dots, x_{P+1}]$.  Consider the degree sequence:
$$d=(0,1,P+1,P+2,...,2P)$$
We will show that the first $P-1$ lattice points of the ray $r_d$ have a codimension obstruction.

Let $\overline{\pi}_d$ be the pure diagram of type $d$ where we fix $\beta_{0,0}(\overline{\pi}_d)=1$.  We claim that:
\begin{itemize}
   \item  $\beta_{1,1}(\overline{\pi}_d)=2$
   \item  All the entries of $\beta(\overline{\pi}_d)$ are positive integers.
\end{itemize}
For both claims we use the formula $\beta_{i,d_i}(\overline{\pi}_d)=\Pi_{k\ne i} \frac{d_k}{(-1)^k(d_i-d_k)}$.  We first compute:
$$\beta_{1,1}(\overline{\pi}_d)=\frac{(P+1)\cdot \dots \cdot (2P-1)\cdot (2P)}{(P\cdot(P+1) \dots (2P-1))}=\frac{2P}{P}=2$$
For the other entries of $\overline{\pi}_d$ we compute:
$$\beta_{i,d_i}(\overline{\pi}_d)=\frac{2P\cdot (2P-1)\cdot \dots \cdot (P+1)}{(i-2)!(P-i+1)!}\cdot \frac{1}{P+i-1}\cdot \frac{1}{P+i-2}=\frac{1}{P}\binom{P+i-3}{i-2} \binom{2P}{P-i+1}$$
Note that $\binom{2P}{P-i+1}$ is divisible by $P$ for all $i\geq 2$ and thus $\beta_{i,d_i}(\overline{\pi}_d)$ is an integer as claimed.

Since $\beta_{0,0}=1$ and $\beta_{1,1}=2$, the diagram $c\cdot \overline{\pi}_d$ hs a codimension obstruction for $c=1, \dots, P-1$.  Thus the first $P-1$ lattice points of the ray of $\pi_d$ do not correspond to Betti diagrams.
\end{proof}

\begin{proof}[Proof of Part (4) of Theorem \ref{thm:negative}]Consider the ray corresponding to $$D_2=\begin{pmatrix}2&4&3&-\\-&3&4&2  \end{pmatrix}$$ Proposition \ref{prop:dd'd''} shows that $D_2$ does not belong to $\Bmod$.  In Example \ref{ex:nonconsec} we showed that $2\cdot D_2$ does belong to $\Bmod$.  Thus, it will be sufficient to show that
$$3\cdot D_2= \begin{pmatrix}6&12&9&-\\ -&9&12&6  \end{pmatrix}$$
does not belong to $\Bmod$.

We assume for contradiction that there exists $M$ such that $\beta(M)=3 \cdot D_2$.  Then the minimal free resolution of $M$ is as below:
\begin{equation}\label{eqn:6129}
0 \rTo R(-4)^{6} \rTo^{\left( \begin{smallmatrix}Q_3 \\ U_3  \end{smallmatrix} \right)} R(-2)^9\oplus R(-3)^{12} \rTo^{\left( \begin{smallmatrix} T_2 & Q_2 \\ 0 & U_2  \end{smallmatrix} \right)} R(-1)^{12}\oplus R(-2)^9 \rTo^{\left( \begin{smallmatrix} T_1 &Q_1  \end{smallmatrix} \right)} R^6
\end{equation}
where $T_1, T_2, U_2$ and $U_3$ are matrices of linear forms.  By Proposition \ref{prop:maxminobs} we have that  $\rank(T_1)+\rank(U_3)\leq 9$.  Since the diagram $3\cdot D_2$ is Cohen-Macaulay and symmetric, we may use Remark \ref{rmk:duality} to assume that $\rank(T_1)\leq 4$.

We next use the fact that, after a change of coordinates, $T_2$ contains a second syzygy which involves only $2$ of the variables of $S$.  This fact is proven in Lemma \ref{lem:secondsyz} below.  Change coordinates so that the first column of $T_2$ represents this second syzygy and equals:
$$\begin{pmatrix} y\\ -x\\ 0 \\ \vdots \\ 0\end{pmatrix}$$
Since $T_1$ must be indecomposable, we may put $T_1$ into the form:
\begin{equation}\label{eqn:T1form}
T_1=\begin{pmatrix}
x&y&z&0& \dots & 0\\
0&0&*&*& \dots & *\\
\vdots&&&& &\vdots\\
0&0&*&*& \dots & *
\end{pmatrix}
\end{equation}
Now set $\widetilde{T_1}$ to be the lower right corner of $*$'s in $T_1$.  Since $\rank(T_1)\leq 4$ we have that $\rank(\widetilde{T}_1)\leq 3$.  Matrices of rank $\leq 3$ are fully classified, and by applying Corollary 1.4 of \cite{eis-har-vecs} we conclude that $\widetilde{T}_1$ is a compression space.  We can rule out the compression spaces cases where $\widetilde{T}_1$ has a column or a row equal to zero, or else $T_1$ would have been decomposable.  Thus $\widetilde{T}_1$ is equivalent to one of the two following forms:
$$
\begin{pmatrix}
0&0&0&0&0&0&0&0&0&*\\
0&0&0&0&0&0&0&0&0&*\\
0&0&0&0&0&0&0&0&0&*\\
0&0&0&0&0&0&0&0&0&*\\
*&*&*&*&*&*&*&*&*&*\\
*&*&*&*&*&*&*&*&*&*
\end{pmatrix} \text{ or }
\begin{pmatrix}
0&0&0&0&0&0&0&0&*&*\\
0&0&0&0&0&0&0&0&*&*\\
0&0&0&0&0&0&0&0&*&*\\
0&0&0&0&0&0&0&0&*&*\\
0&0&0&0&0&0&0&0&*&*\\
*&*&*&*&*&*&*&*&*&*
\end{pmatrix}
$$
If we subsitute the matrix on the left into the form for $T_1$ from \ref{eqn:T1form}, then we see that $T_1$ would have $8$ $k$-linearly indepdendent columns which are supported on only the bottom two rows.  Since all entries of $T_1$ are linear forms in $k[x,y,z]$, this is impossible.  We can similarly rule out the possibility of the matrix on the right.
\end{proof}

\begin{lemma}\label{lem:secondsyz}
If there exists a minimal resolution as in Equation (\ref{eqn:6129}), then the matrix $T_2$ contains a second syzygy involving only $2$ variables of $S$.
\end{lemma}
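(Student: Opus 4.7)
The plan is to produce a nonzero $\sigma \in k^9$ (passing to $\bar k$ if necessary) for which the column-combination $T_2\sigma \in V \otimes k^{12}$ (with $V = \langle x,y,z\rangle$), viewed as a $3 \times 12$ matrix whose rows are indexed by $V$, has matrix rank at most $2$. Such a $\sigma$ yields a linear syzygy of $T_1$ whose entries all lie in a $2$-dimensional subspace $V' \subset V$; after changing the basis of the source $N_2 = k^9$ to put $\sigma$ first, and choosing coordinates on $V$ so that $V' = \langle x,y\rangle$, the first column of $T_2$ involves only $x$ and $y$, which is the desired conclusion.

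The construction of $\sigma$ proceeds by specialization of the matrix identity $T_1 T_2 = 0$. For every $v \in V$ the specializations satisfy $T_1(v)\,T_2(v) = 0$ as matrices over $k$, so $\im T_2(v) \subseteq \ker T_1(v)$ and $\rank T_2(v) \leq 12 - \rank T_1(v)$. As soon as some $v$ achieves $\rank T_1(v) \geq 4$, this forces $\rank T_2(v) \leq 8$, producing a nonzero $\sigma \in \ker T_2(v)$. Writing $T_2 = xA + yB + zC$ with $A,B,C \in \Hom(k^9, k^{12})$ and $v = (a,b,c)$, the relation $T_2(v)\sigma = aA\sigma + bB\sigma + cC\sigma = 0$ in $k^{12}$ says exactly that the three rows $A\sigma, B\sigma, C\sigma$ of the $3 \times 12$ matrix $T_2\sigma$ are $k$-linearly dependent, so $T_2\sigma$ has matrix rank at most $2$. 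Minimality of the resolution makes the columns of $T_2$ linearly independent in $V \otimes k^{12}$, so $T_2\sigma \neq 0$.

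The main obstacle is producing some $v$ with $\rank T_1(v) \geq 4$. Proposition~\ref{prop:maxminobs} together with Remark~\ref{rmk:duality} supplies $\rank T_1 \leq 4$ over $K = k(x,y,z)$; I must rule out $\rank T_1 \leq 3$. Suppose for contradiction $\rank T_1 \leq 3$, and write $T_1 = xA' + yB' + zC'$ with $A',B',C' \in \Hom(k^{12}, k^6)$. Then every $k$-linear combination $aA' + bB' + cC'$ has rank at most $3$ as a $6 \times 12$ matrix, so Corollary 1.4 of \cite{eis-har-vecs} identifies the $3$-dimensional subspace $\langle A',B',C'\rangle \subseteq \Hom(k^{12},k^6)$ as a compression space: there exist subspaces $U \subseteq k^{12}$ and $W \subseteq k^6$ with $A'(U), B'(U), C'(U) \subseteq W$ and $\dim W + (12 - \dim U) \leq 3$. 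But minimality of the resolution forces the $k$-linear map $T_1 \colon k^{12} \to V \otimes k^6$ sending each basis vector to its column to be injective, and its restriction to $U$ embeds $U$ into $V \otimes W$, giving $\dim U \leq 3\dim W$. Combined with $\dim U \geq 9 + \dim W$, this forces $\dim W \geq 5$ and hence $\dim U \geq 14$, contradicting $\dim U \leq 12$. Therefore $\rank T_1 = 4$; any $v$ achieving the generic rank now gives the promised $\sigma$ and completes the argument.
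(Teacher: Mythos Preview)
Your argument is correct and follows a genuinely different route from the paper's. The paper proceeds by contradiction: assuming no column of $T_2$ lies in a two-variable subspace, it reduces modulo $z$ and studies $\overline{T_1}\,\overline{T_2}=0$ over $k[x,y]$. A $6\times 12$ matrix of linear forms in two variables admits at most six $k$-independent linear syzygies, so at least three columns of $\overline{T_2}$ must be ``trivial'' (supported on zero columns of $\overline{T_1}$); analyzing these via the Kronecker normal form forces at least four columns of $\overline{T_1}$ to vanish, and iterating this count eventually drives eight columns of $\overline{T_1}$ to zero, contradicting the $k$-linear independence of the columns of $T_1$. Your approach instead shows directly that $\rank_K T_1\ge 4$ by invoking the Eisenbud--Harris compression-space classification for rank~$\le 3$ (the same Corollary~1.4 of \cite{eis-har-vecs} that the paper uses later in the proof of Theorem~\ref{thm:negative}\eqref{thm:negative:nonconsec}, though not in this lemma), and then specializes $T_1T_2=0$ at a point $v$ realizing generic rank to force $\ker T_2(v)\ne 0$; any nonzero $\sigma$ there gives a column $T_2\sigma$ supported on a $2$-plane in $V$. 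Your route is shorter and more conceptual; the paper's is more elementary in that it avoids the structure theorem for low-rank matrix spaces.

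One small expository point: invoking Proposition~\ref{prop:maxminobs} with Remark~\ref{rmk:duality} to get $\rank T_1\le 4$ is both unnecessary and slightly out of place here---duality replaces $M$ by $M^\vee$ and hence changes $T_2$, whereas the lemma concerns a fixed resolution. Fortunately your argument only uses $\rank T_1\ge 4$, which you establish directly via the compression-space contradiction, so simply drop that sentence and write ``therefore $\rank_K T_1\ge 4$'' in place of ``$=4$''; the rest goes through unchanged.
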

\begin{proof}
Assume that this is not the case and quotient by the variable $z$.  Then the quotient matrices $\overline{T_1}$ and $\overline{T_2}$ still multiply to $0$.  It is possible that after quotienting, some of the columns of $T_1$ are dependent.  However this is not possible for $T_2$.  For if some combination went to $0$ after quotienting by $z$, then there would exist a column of $T_2$, i.e. a second syzygy of $M$, which involves only the variable $z$.  This is clearly impossible.  Thus the columns of $\overline{T_2}$ are linearly independent.

Nevertheless, we know that the columns of a $6\times 12$ matrix of linear forms over $k[x,y]$ can satisfy at most $6$ independent linear syzygies.  By changing coordinates we may arrange that $3$ of the columns of $\overline{T_2}$ are ``trivial'' syzygies on $\overline{T_1}$.  By a ``trivial'' syzygy, we mean a column of $\overline{T_2}$ where the nonzero entries of that columns multiply with zero entries of $\overline{T_1}$.  For an example of how a nontrivial syzygy over $k[x,y,z]$ can become trivial after quotienting by $z$, consider:
$$\begin{pmatrix}x&z&0\\y&0&z  \end{pmatrix} \begin{pmatrix}z\\-x\\-y\end{pmatrix} \to \begin{pmatrix}x&0&0\\y&0&0  \end{pmatrix} \begin{pmatrix}0\\-x\\-y\end{pmatrix}$$

Change coordinates so that the first $3$ columns of $\overline{T_2}$ represent the trivial syzygies and are in Kronecker normal form.  By assumption, each column of $\overline{T_2}$ involves both $x$ and $y$, so these first $3$ columns must consist of combinations of the following Kronecker blocks:
$$
B_1=\begin{pmatrix}x\\ y \end{pmatrix}, B_2=\begin{pmatrix}x&0\\ y&x\\ 0&y \end{pmatrix}, B_3=\begin{pmatrix}x&0&0\\ y&x&0\\ 0&y&x\\ 0&0&y \end{pmatrix}
$$
Since each nonzero entry in the trivial part of $\overline{T_2}$ must multiply with a $0$ from $\overline{T_1}$, this forces certain columns of $\overline{T_1}$ to equal $0$.  More precisely, the number of nonzero rows in the trivial part of $\overline{T_2}$ is a lower bound for the number of columns of $\overline{T_1}$ which are identically zero.  The block decomposition shows that the trivial part of $\overline{T_2}$ has at least $4$ nonzero rows, and thus $\overline{T_1}$ has at least $4$ columns which are identically zero.

But now the nonzero part of $\overline{T_1}$ is a $6\times 8$ matrix of linear forms, and this can satisfy at most $4$ linear syzygies.  This forces two {\em additional} columns of $\overline{T_2}$ to be trivial syzygies which in turn forces more columns of $\overline{T_1}$ to equal zero, and so on.

Working through this iterative process, we eventually conclude that $\overline{T_1}$ contains $8$ columns which are identically zero.  This means that $T_1$ must have contained $8$ columns which involved only $z$.  But since $T_1$ is a $6\times 12$ matrix of linear forms with linearly independent columns, this is impossible.
\end{proof}

\begin{remark}
Consider the diagram:
$$
D=\frac{a}{2}\pi_{(0,1,2,4)}+\frac{b}{2}\pi_{(0,2,3,4)}=
\begin{pmatrix}
\frac{3a+b}{2} & 4a & 3a& - \\
- & 3b & 4b & \frac{a+3b}{2}
\end{pmatrix}
$$
Clearly $D\in\BN$ if and only if $a+b$ is even.  By an argument analogous to that in the proof of Theorem \ref{thm:negative} part \eqref{thm:negative:infty}, one can show that $D\notin \Bmod$ if $a=1$ or $b=1$.

Recent unpublished work of \cite{eis-schrey2} uses this example to greatly strengthen parts (2) and (4) of Theorem \ref{thm:negative}.  They show that $D\notin \Bmod$ whenever $a$ is odd (or equivalently whenever $b$ is odd).  Furthermore, they show that if $M$ is any module such that:
$$
\beta(M)=a'\pi_{(0,1,2,4)}+b'\pi_{(0,2,3,4)}
$$
then the module $M$ splits into a direct sum of the pure pieces.  Namely, $M\cong M'\oplus M''$ where $\beta(M')=a'\pi_{(0,1,2,4)}$ and $\beta(M'')=b'\pi_{(0,2,3,4)}$.  Similar results are shown to hold in codimension greater than $3$.

Based on a generalization of the methods of \cite{eis-schrey2}, we have recently computed all generators for $\Bmod$ when $\underline{d}=(0,1,2,3)$ and $\overline{d}=(1,2,3,4)$.  This computation will appear in \cite{erman-thesis}.
\end{remark}

\section{Further Questions}\label{sec:questions}
An ambitious question is whether we can find a better description of $\Bmod$ or compile a complete list of obstructions.  Here are several more specific questions.  A further list of questions is compiled in \cite{questions}.
\begin{enumerate}

\item{\bf Bounds on $\Bmod$:}  Can we bound the number of generators of the semigroup of Betti diagrams?  Can we bound the size of a minimal generator of the semigroup of Betti diagrams?

\item {\bf The behavior of single rays:}
Given a degree sequence $d$, what is the minimal $c_d$ such that $c_d\pi_d$ is the Betti diagram of some module?  In many cases where computation is feasible, it is known that the examples produced by \cite{efw} and \cite{eis-schrey} do not represent the first element of $\Bmod$ on the ray.  In some other cases, it is known that $\pi_d$ itself does not belong to $\Bmod$ so that $c_d$ is greater than $1$.  Can we find better lower and upper bounds for the integer $c_d$?

\item {\bf Dependence on characteristic:}  Schreyer's conjecture that the semigroup of Betti diagrams depends on the characteristic of $k$ has recently been proven by Kunte in Corollary 2.4.10 of \cite{kunte}.  In particular, Kunte shows that the virtual Betti diagram:
$$\begin{pmatrix}1&-&-&-&-&-\\
-&10&16&-&-&-\\
-&-&-&16&10&-\\
-&-&-&-&-&1\\
 \end{pmatrix}$$
is not the Betti diagram of a finite length algebra when the characteristic of $k$ equals $2$.  It was previously known that this is a Betti diagram when the characteristic of $k$ equals $0$.  To what extent does $\Bmod$ depend on the characteristic?  Can we find obstructions which only live in specific characteristics?

\end{enumerate}

\bibliographystyle{alpha}

\end{document}